\title{Existence of spanning $\F$-free subgraphs with large minimum degree}
\author{Guillem Perarnau\footnote{\noindent School of Mathematics, University of Birmingham, Montreal, Quebec, Canada. g.perarnau@bham.ac.uk.} \and Bruce Reed\footnote{\noindent  Kawarabayashi Large Graph Project,
National Institute of Informatics, Japan. breed@mcgill.ca.}}
\date{\today}
\theoremstyle{plain}
\newtheorem{theorem}{Theorem}
\newtheorem{lemma}[theorem]{Lemma}
\newtheorem{claim}[theorem]{Claim}
\theoremstyle{definition}
\newtheorem{conjecture}[theorem]{Conjecture}
\theoremstyle{definition}
\newcommand{\rst}[1]{\ensuremath{{\mathbin\upharpoonright}%
\raise-.5ex\hbox{$#1$}}} 
\renewenvironment{proof}[1][Proof]{\begin{trivlist}
\item[\hskip \labelsep {\textit{#1}.}]}{\qed\end{trivlist}}
\newcommand{\bP}{\Pr}
\newcommand{\bE}{\mathbb{E}}
\newcommand{\G}{J}
\newcommand{\F}{\mathcal{F}}
\newcommand{\GN}{\ell}
\newcommand{\Free}{\textbf{Free}}
\newcommand{\Bad}{\textbf{Bad}}
\newcommand{\eps}{\varepsilon}
\renewcommand{\a}{\alpha}
\newcommand{\ex}{\mbox{ex}}
\renewcommand{\hom}{\mbox{hom}}
\begin{document}

\pagenumbering{arabic}

\setcounter{section}{0}

\maketitle

\onehalfspace

\begin{abstract}
Let $\mathcal{F}$ be a family of graphs and let $d$ be large enough. For every 
$d$-regular graph $G$, we study the existence of a spanning $\mathcal{F}$-free subgraph 
of $G$ with large minimum degree. 
This problem is well-understood if $\mathcal{F}$ does 
not contain bipartite graphs. Here we provide asymptotically tight results for 
many families of bipartite graphs such as cycles or complete bipartite graphs.
To prove these results, we study a locally injective analogue of the question.
\end{abstract}

\section{Introduction}

Let $G=(V,E)$ be a $d$-regular graph on $n$ vertices. It is well-known that 
every $d$-regular graph has a spanning triangle-free
subgraph $H$ with minimum degree at least $d/2$.
Let $\F$ denote a family of graphs. We say that $G$ is $\F$-free if for every $F\in\F$, $G$ does not 
contain any subgraph isomorphic to $F$. In this paper we study the which is the largest minimum degree of a spanning $\F$-free subgraph $H$ of a $d$-regular graph $G$. 

Let $\ex(n,\F)$ be the maximum number of edges in an $\F$-free graph on $n$ 
vertices. Since the complete graph $K_n$ on $n=d+1$ vertices is 
$d$-regular, the minimum degree of a spanning $\F$-free subgraph $H$ of 
$K_{d+1}$ is at most $2\ex(d+1,\F)/(d+1)$. In~\cite{fkp2014}, Foucaud, Krivelevich 
and Perarnau conjectured the following.

\begin{conjecture}[\cite{fkp2014}]\label{conj:fkp}
For every family $\F$ there exists a constant $c_\F>0$ such that for every $d\geq 1$ and every $d$-regular graph $G$, there exists a spanning $\F$-free subgraph 
$H$ of $G$ with minimum degree $\delta(H)\geq c_\F\cdot \frac{\ex(d,\F)}{d}$.
\end{conjecture}

Here and throughout the paper, for any two functions $f,g$ that depend on several variables, including $d$, we will use the standard asymptotic notation $g=\Omega(f)$ to denote the following: given that all variables apart from $d$ are fixed, we have $\liminf_{d\to \infty} (g/f)>0$. We also use $g=O(f)$ to denote $f=\Omega(g)$, and $g=\Theta(f)$ if both $g=\Omega(f)$ and $g=O(f)$ hold. For instance, Conjecture~\ref{conj:fkp} can be written as:  given a family $\F$, for every $d$ and every $d$-regular graph $G$, there exists a spanning $\F$-free subgraph $H$ of $G$ with minimum degree $\delta(H)= \Omega(\ex(d,\F)/d)$.

If the chromatic number of $\F$\footnote{The chromatic number of  $\F$ is the smallest chromatic number of $F\in \F$.} is at least $3$~(i.e., there are no bipartite graphs in $\F$)
it is easy to verify the conjecture (see Proposition~1 in~\cite{fkp2014} for a precise result). 

The family $\F_g=\{C_3,\dots, C_{g-1}\}$ is of special interest since a graph $H$ is $\F_g$-free if and only if $H$ has girth at least $g$. 
Even if the order of $\ex(d,\F_g)$ is not known in general, there are some partial results towards Conjecture~\ref{conj:fkp} for $\F_g$. 
Kun~\cite{k2013} showed that for every $g\geq 4$, every $d$-regular graph $G$ admits a spanning $\F_g$-free
subgraph with minimum degree $\Omega(d^{1/g})$. 
Foucaud, Krivelevich and Perarnau~\cite{fkp2014} improved the lower bound on the minimum degree to $\Omega\left(\frac{\ex(d,\F_g)}{d\log{d}}\right)$. 
This shows that Conjecture~\ref{conj:fkp} holds up to a 
logarithmic factor for $\F_g$. 

In this paper we prove Conjecture~\ref{conj:fkp} for a large number of 
families $\F$ with chromatic number $2$, which in particular includes $\F_g$.

Before stating our main result we need to introduce some definitions. For any two graphs $F$ and $\G$, we say that $\varphi: V(F)\to V(\G)$ is an 
\emph{homomorphism} if for every $uv\in E(F)$ we have $\varphi(u)\varphi(v)\in E(\G)$. We say that $\varphi$ is \emph{locally injective} if, for every $v\in F$, the 
restriction of $\varphi$ onto the neighbors of $v$ in $F$ is injective. In other words, if $u_1$ and $u_2$ are neighbors of $v$ in $F$, then $\varphi(u_1)\neq\varphi(u_2)$. Let $\hom^*(F,\G)$ be the number of locally injective homomorphisms from $F$ to 
$\G$. Observe that the condition $\hom^*(F,\G)=0$, for every $F\in \F$, is 
stronger than $\F$-freeness. Any copy of $F$ in $\G$ induces an injective 
homomorphism from $F$ to $\G$ which, in particular, is also locally injective. 

For every graph $\G$, we denote by $\delta(\G)$ its minimum degree and by $\Delta(\G)$ its maximum degree. For every $\beta\geq 1$, we say that $\G$ is \emph{$\beta$-almost regular} if $\Delta(\G)\leq \beta\delta(\G)$.

Our first result is a locally injective version of Conjecture~\ref{conj:fkp}.
\begin{theorem}\label{thm:more_general}
Let $\F$ be a family of graphs, $\epsilon>0$, $\beta\geq 1$ and $\a \geq 25\beta^2$, and let $d$ be large enough. If $\G$ is a graph on $\a d$ vertices such that,
\begin{itemize}
\item[-] $\G$ is $\beta$-almost regular,
\item[-] $\G$ has minimum degree $\delta(\G)>d^{\eps}$, and
\item[-] $\hom^*(F,\G)=0$, for every $F\in \F$,
\end{itemize}
then, for every $d$-regular graph $G$ there exists a spanning subgraph $H$ of $G$ with $\delta(H)= 
\Omega(\delta(\G))$  and  $\hom^*(F,H)=0$, for every $F\in \F$.
\end{theorem}

The proof goes as follows. Initially, we select a bipartite subgraph $G'$ of $G$ with stable sets $A$ and $B$ and large minimum degree. 
The main part of the proof consists on finding a vertex coloring $\chi$ using as a color palette $V(\G)$. 
We color $A$ in a two steps procedure; the first one being random and the second one, deterministic. 
At the same time we delete some edges of $G'$ to obtain a subgraph $H'$ that has large minimum degree and where vertices in $B$ have rainbowly colored neighborhoods. 
We color $B$ afterwards via an iterative coloring procedure which also finishes with a deterministic step. 
Again, during the coloring procedure we delete some edges to obtain a subgraph $H$ that has minimum degree $\Omega(\delta(\G))$ and where all the neighborhoods are rainbowly colored. 
Moreover, the coloring satisfies that $\chi(a)\chi(b)$ is an edge of $\G$ for every edge $ab$ of $H$.
Thus, $\chi$ naturally provides a locally injective homomorphism from $H$ to $\G$. Since $\G$ has no locally injective copies of $F$, neither does $H$.

Theorem~\ref{thm:more_general} has some direct implications for Conjecture~\ref{conj:fkp}.
A family of graphs $\F$ is \emph{closed} if for every graph $\G$ we 
have: $\hom^*(F,\G)=0$ for every $F\in\F$ if and only if $\G$ is $\F$-free; that is, if $\G$ is $\F$-free, then there are no locally injective copies of $F\in\F$ in $\G$.
As a consequence of Theorem~\ref{thm:more_general} we obtain that Conjecture~\ref{conj:fkp} holds for every closed family.
\begin{theorem}\label{thm:main}
Conjecture~\ref{conj:fkp} holds for every closed family $\F$ of graphs: every $d$-regular graph $G$ has a spanning $\F$-free subgraph $H$ with minimum degree $\delta(H)= \Omega(\ex(d,\F)/d)$.
\end{theorem}

Theorem~\ref{thm:main} gives a lower bound on the minimum degree in terms of $\ex(d,\F)$. 
We now provide some explicit results for some important families $\F$ with chromatic number $2$ that are closed. 
The problem of determining $\ex(d,\F)$ when $\F$ contains 
bipartite graphs is one of the most important in extremal graph theory~(see~\cite{fs2013} for a 
complete survey on the topic). We use some well-known constructions of extremal graphs to provide explicit corollaries of Theorem~\ref{thm:main}:
\begin{itemize}
\item[-] Let $\F=\F_g=\{C_3,\dots, C_{g-1}\}$. Observe that $\F$ is a closed family. 
However, the asymptotic order of $\ex(d,\F)$ is not known in general. Using the 
Erd\H os-R\'enyi random graph $G(d,p)$ for some suitably chosen probability $p=p(d,g)$, one can show the existence of an $\F$-free graph of order $d$ and $\Theta(d^{1+1/(g-3)})$ edges. 
This provides a lower bound for $\ex(d,\F)$.
By Theorem~\ref{thm:main}, we have that every $d$-regular graph 
$G$ has a spanning subgraph with girth at least $g$ and minimum degree 
$\Omega(d^{1/(g-3)})$. This improves the result of 
Kun~\cite{k2013}. 

\item[-] Let $\F=\F_6=\{C_3,C_4,C_5\}$. From the constructions of extremal $C_4$-free graphs 
provided by Erd\H os, R\' enyi and S\' os~\cite{ers1966} and Brown~\cite{b1966} 
one can obtain an $\F$-free graph of order $d$ and $\Theta(d^{3/2})$ edges. 
Again, by Theorem~\ref{thm:main}, for every $d$-regular graph $G$ we 
can show the existence of a spanning subgraph with girth at least $6$ and 
minimum degree $\Omega(\sqrt{d})$. Extremal constructions for graphs with girth 
at least $8$ and $12$ are also known~\cite{lv2005,luw1999}. From them we can 
obtain tight explicit lower bounds for $\F_8=\{C_3,\dots,C_7\}$ and 
$\F_{12}=\{C_3,\dots,C_{11}\}$.

\item[-] Let $\F=\{K_{a,b}\}$ with $a\leq b$. Since $K_{a,b}$ has diameter two, 
each locally injective homomorphism of $K_{a,b}$ onto a graph $\G$, is also 
injective. Thus, if $\hom^*(K_{a,b},\G)=0$, then $\G$ is $K_{a,b}$-free, and 
$\F$ is closed. In particular, any family composed by complete bipartite graphs 
is closed. It is conjectured that $\ex(d,K_{a,b})=\Theta(n^{2-1/a})$. While the 
upper bound has been proved for all values of $a$ and $b$~(Kov{\'a}ri, S{\'o}s 
and Tur{\'a}n~\cite{kst1954}), the lower bound is still wide open. However, 
it is known to be true in the following cases: $a=2$ and $b\geq 2$, $a=3$ and 
$b\geq 3$ (Brown~\cite{b1966}), and $b>(a-1)!$ (Koll\'ar, R\'onyai and 
Szab\'o \cite{krs1996} and Alon, R\'onyai and 
Szab\'o~\cite{ars1999}). Using these results we can get tight explicit lower 
bounds on the minimum degree of the largest subgraph of a $d$-regular graph that induces no 
complete bipartite subgraph of a given size.

\item[-]  Let $\F=\{Q_3\}$, where $Q_s$ is the $s$-dimensional hypercube. The 
family $\F$ is not closed since $Q_3$ admits a locally injective homomorphism to $K_4$, that is not injective. As before, the family $\F'=\{K_4, Q_3\}$ is closed. Since $\chi(K_4)=4$, we have $\ex(d,\F)=\Theta(\ex(d,\F'))$ and Conjecture~\ref{conj:fkp} is true for $\F$. It is conjectured 
in~\cite{es1970} that $\ex(d,\F)=\Theta(d^{8/5})$, but no better lower bound that 
$\ex(d,\F)\geq \ex(d,C_4) = \Omega(d^{3/2})$ is known.
\end{itemize}

Theorem~\ref{thm:main} solves in the affirmative 
Conjecture~\ref{conj:fkp} for families of graphs satisfying a ``local'' 
condition. However, there are many families the contain bipartite graphs and that are not closed. 
In order to solve the conjecture for every family of graph $\F$, one 
needs to extend the idea of local injectivity in Theorem~\ref{thm:more_general} to injectivity. In terms of 
colorings, it would suffice to prove the existence of a spanning subgraph $H$ with relatively
large minimum degree and a coloring $\chi$ such that \emph{all} copies of $F$ in 
$H$ are rainbow. This is stronger than the rainbow condition in 
the neighborhoods of $H$ that we impose here.

Finally, Theorems~\ref{thm:more_general} and~\ref{thm:main} study the case where $G$ is 
$d$-regular. Similar results in terms of the maximum and minimum degree have 
been given in~\cite{fkp2014}. We believe that the same techniques used here 
could be extended to the non regular case, possibly adding a mild condition between the maximum and the minimum degree.
\vspace{0.5cm}

\textbf{Related work.} Conjecture~\ref{conj:fkp} is closely related to  the following very general question: 
given a graph parameter $\rho$, a value $k$ and a family of graphs $\F$, determine the largest value of $\ell$ such 
that for every graph $G$ with $\rho(G)\geq k$ there exists an $\F$-free subgraph $H$ of $G$ with $\rho(H)\geq \ell$. 
Here we list some interesting results for other important graph parameters:
\begin{itemize}
\item[-] Let $\rho$ be the average degree. Thomassen~\cite{t1983} conjectured that every graph with average degree at least $d$ has an $\F_g$-free subgraph with average degree $f(d)$, for some $f(d)\to\infty$ as $d\to \infty$. K\"uhn and Osthus~\cite{ko2004} showed that Thomassen's conjecture is true for $g=6$, but the general conjecture is still wide open. 

\item[-] Let $\rho$ be the number of edges. Bollob\'as and Erd\H os asked this problem for the case $\F=\{C_4\}$ in a workshop in 1966~\cite{e1968}. The problem was rediscovered by Foucaud, Krivelevich and Perarnau~\cite{fkp2014} and they provided lower bounds for the case $\F_g$ that are tight up to logarithmic factors.  Conlon, Fox and Sudakov~\cite{cfs2014,cfs2015} proved tight lower bounds when $\F$ is composed by complete bipartite graphs. In particular, this answers the question of Bollob\'as and Erd\H os.

\item[-] Let $\rho$ be the vertex-connectivity. Thomassen~\cite{t1989} also conjectured that every $k$-connected graph has a bipartite subgraph ($\F$-free for $\F=\{C_3,C_5,C_7,\dots\}$) with connectivity $f(k)$, for some $f(k)\to\infty$ as $k\to \infty$.  A first step towards the proof of this conjecture is the result of Delcourt and Ferber~\cite{df2014}.
\end{itemize}

\section{Proof of Theorem~\ref{thm:main} assuming Theorem~\ref{thm:more_general}}

First, we may assume that the chromatic number of $\F$ is $2$. Otherwise, Theorem~\ref{thm:more_general} can be easily proven (see Proposition~1 in~\cite{fkp2014} for a stronger version of it). We can also assume that $\F$ does not contain any forest. If $\F$ contains a forest $T$, since $\ex(d,\F)\leq \ex(d,T)=O(d)$, then Theorem~\ref{thm:more_general} is trivially true.

Recall that for every $\beta\geq 1$, we say that $\G$ is \emph{$\beta$-almost regular} if $\Delta(\G)\leq \beta\delta(\G)$.
Erd\H os and Simonovits~\cite{es1970} showed that for every $\gamma\in (0,1)$, there exists $\beta\geq 1$ such that the following holds for every family of graphs $\F$ and every positive integer $m$: if every $\F$-free $\beta$-almost regular graph of order $m$ has at most $O(m^{1+\gamma})$ edges, then $\ex(m,\F)=O(m^{1+\gamma})$, where the asymptotic notation here corresponds to $m\to \infty$. 
Otherwise stated, for every $\F$ and every $m$, there exists a $\beta$-almost regular graph $\G$ on $m$ vertices and $\Omega(\ex(m,\F))$ edges.
In particular, $\G$ satisfies $\delta(\G)=\Omega(\ex(m,\F)/m)$.

A classic result of Erd\H os~\cite{e1959,e1961} states that if $\F$ does not contain any forest, then there exists a 
constant $\eps_0=\eps_0(\F)>0$ such that for every large enough $m$, 
$\ex(m,\F)> m^{1+\eps_0}$. 

From these two results, we conclude that for every $\F$ that does not contain a forest there exist $\epsilon>0$ and $\beta\geq 1$ such that for every large enough $m$, there is an $\F$-free $\beta$-almost regular graph $\G$ on $m$ vertices with $\delta(\G)=\Omega(\ex(m,\F)/m)> m^{\eps}$.

Since $\F$ is closed by the hypothesis of Theorem~\ref{thm:main}, by the definition of closed, any $\F$-free graph $\G$ also satisfies $\hom^*(F,\G)=0$, for every $F\in \F$.

Given the family $\F$, the choice of $\a\geq 25\beta^2$ and setting $m=\a d$, we note that
$$
\delta(\G)=\Omega\left(\frac{\ex(\a d,\F)}{\a d}\right) = \Omega\left(\frac{\ex(d,\F)}{d}\right)\;.
$$
Thus, we can use $\G$ in Theorem~\ref{thm:more_general} to find a spanning $\F$-free subgraph $H$ of $G$ with minimum degree  $\delta(H)=\Omega\left(\frac{\ex(d,\F)}{d}\right)$, concluding the proof of Theorem~\ref{thm:main}.

\section{Notation and Probabilistic Tools}

For every $v\in V(G)$  we denote by $N_G(v)$ the set of vertices adjacent to 
$v$ in $G$, by $d_G(v)=|N_G(v)|$ the degree of $v$ in $G$ and by $N^2_G(v)$ the 
set of vertices at distance two from $v$ in $G$. If the graph $G$ is clear from 
the context, we use $N(v)$, $d(v)$ and $N^2(v)$. We denote by $\Delta(G)$ and 
by $\delta(G)$ the maximum and the minimum degree of $G$ respectively. We denote by $\chi$ a 
vertex (partial) coloring of a graph $G$. Throughout the paper, we identify the color palette with the vertices of a graph $\G$ of order $\a d$ and talk indistinctly of an $\a d$ coloring or a $V(\G)$ coloring. 
We use $\chi(G)$ to denote the 
chromatic number of $G$, and for every family of graphs $\F$, we also use 
$\chi(\F)=\min_{F\in\F} \chi(F)$. For every vertex $v\in V$, we denote by 
$\chi(v)$ its color and for every set $S\subseteq V(G)$, by $\chi(S)$ the set 
of colors appearing in $S$. We call $S\subseteq V$  \emph{rainbow} if 
$\chi(u)\neq \chi(v)$ for every $u,v\in S$, $u\neq v$, that were assigned a 
color by $\chi$.

Below, we introduce some standard tools from the probabilistic method that can be 
found in~\cite{mr2002} and that we will be of use in the proof of Theorem~\ref{thm:more_general}.

\begin{lemma}[Chernoff's inequality]\label{lem:chernoff}
	For any $0\leq t\leq np$:
$$
\bP(|\emph{Bin}(n,p)-np|>t)<2 e^{-t^2/3np}\;.
$$
Moreover, for every $t\geq 7np$
$$
\bP(\emph{Bin}(n,p)>t)<e^{-t}\;.
$$
\end{lemma}

\begin{lemma}[Talagrand's inequality]\label{lem:talagrand}
Let $X$ be a nonnegative random variable not identically $0$, which is 
determined by $n$ independent trials $T_1,\dots,T_n$ and satisfying the 
following for some $c_1,c_2>0$:
\begin{itemize}
\item[-] changing the outcome of any one trial can affect $X$ by at most $c_1$, and
\item[-] for any $s$, if $X\geq s$ then there is a set of at most $c_2 s$ trials 
whose outcomes certify that $X\geq s$,
\end{itemize}
then for any $0\leq t\leq \bE(X)$,
$$
\bP(|X-\bE(X)|>t+60c_1\sqrt{c_2\bE(X)})\leq 4 e^{-t^2/(8c_1^2 c_2 \bE(X))}\;.
$$
\end{lemma}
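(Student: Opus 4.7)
The plan is to deduce the stated bound from Talagrand's abstract isoperimetric inequality on product probability spaces. Equip $\Omega=\prod_{i=1}^n \Omega_i$ with the product measure $P$ induced by the independent trials $T_1,\dots,T_n$, and define Talagrand's convex distance from $\omega\in\Omega$ to a measurable $A\subseteq\Omega$ by
$$d_T(\omega,A)=\sup_{\alpha\in\mathbb{R}^n_{\geq 0},\,\|\alpha\|_2=1}\,\inf_{\omega'\in A}\,\sum_{i\,:\,\omega_i\neq\omega'_i}\alpha_i.$$
I would take as given the isoperimetric inequality $\int e^{d_T(\omega,A)^2/4}\,dP(\omega)\leq 1/P(A)$, which combined with Markov's inequality yields
$$P(A)\cdot \Pr(d_T(\cdot,A)\geq u)\leq e^{-u^2/4}.$$

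I would then translate this geometric fact into tail bounds on $X$. Let $M$ be a median of $X$ and set $A=\{\omega:X(\omega)\leq M\}$, so $P(A)\geq 1/2$. For any $\omega$ with $X(\omega)\geq s>M$, the certifiability hypothesis supplies an index set $I\subseteq[n]$ of size at most $c_2 s$ whose trial outcomes already force $X\geq s$. Choosing the unit vector $\alpha=\mathbf{1}_I/\sqrt{|I|}$, any $\omega'\in A$ must satisfy $|\{i\in I:\omega_i\neq\omega'_i\}|\geq (s-M)/c_1$, because otherwise the $c_1$-Lipschitz hypothesis would give $X(\omega')>M$, contradicting $\omega'\in A$. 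Hence $d_T(\omega,A)\geq (s-M)/(c_1\sqrt{c_2 s})$, and the Markov bound delivers
$$\Pr(X\geq s)\leq 2\exp\!\Bigl(-\frac{(s-M)^2}{4\,c_1^2\,c_2\,s}\Bigr).$$
An entirely symmetric argument with $A=\{X\geq M\}$ and the unit vector supported on a certificate for a typical low value of $X$ controls the lower tail with the same exponent.

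The last step is to pass from median to mean. Integrating the upper-tail bound in $s$ shows $|\E(X)-M|=O(c_1\sqrt{c_2\,\E(X)})$; then substituting $s=\E(X)+t+60 c_1\sqrt{c_2\E(X)}$, bounding $s\leq 2\E(X)$ in the denominator (valid in the regime $0\leq t\leq\E(X)$ once the additive slack is small relative to $\E(X)$), and combining the two tails yields the stated inequality with prefactor $4$ and exponent $t^2/(8 c_1^2 c_2 \E(X))$. The main technical obstacle I anticipate is precisely this constant bookkeeping: the numerical slack $60$ must be chosen large enough to absorb the median-to-mean discrepancy into the shift on the left-hand side while the factor $8$ in the denominator survives the substitution uniformly in $t$. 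The deep ingredient — the abstract isoperimetric inequality for $d_T$ on product spaces — I would quote directly from~\cite{mr2002} rather than reprove.
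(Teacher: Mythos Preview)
The paper does not prove this lemma. It is listed among the ``probabilistic tools'' in Section~1 with the blanket attribution ``that can be found in~\cite{mr2002}'' and is used as a black box thereafter. So there is nothing to compare your argument against in the paper itself.

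For what it is worth, your sketch is essentially the standard derivation that appears in Molloy--Reed~\cite{mr2002}: Talagrand's convex-distance isoperimetry on the product space, then the certifiability/Lipschitz hypotheses to convert convex distance into a deviation of $X$ around its median, and finally the median-to-mean transfer absorbed into the additive $60c_1\sqrt{c_2\E(X)}$ slack. One small imprecision: in your lower-tail paragraph you speak of ``a certificate for a typical low value of $X$''. The actual argument takes $A=\{X\leq M-t\}$ and, for a point $\omega$ with $X(\omega)\geq M$, uses the certificate of size at most $c_2 M$ attached to $\omega$ (not to the low-value point) to force any $\omega'\in A$ to differ from $\omega$ on at least $t/c_1$ certified coordinates; this gives $d_T(\omega,A)\geq t/(c_1\sqrt{c_2 M})$ and hence $P(A)\leq 2e^{-t^2/(4c_1^2 c_2 M)}$. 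With that correction the outline is sound, and the constant bookkeeping you flag is indeed the only remaining work.
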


\begin{lemma}[McDiarmid's inequality]\label{lem:mcdiarmid}
Let $X$ be a nonnegative random variable not identically $0$, which is 
determined by $n$ independent trials $T_1,\dots,T_n$ and $m$ independent uniform random
permutations $\pi_1,\dots,\pi_m$, and satisfying the following for some 
$c_1,c_2>0$:
\begin{itemize}
\item[-] changing the outcome of any one trial can affect $X$ by at most $c_1$,
\item[-] interchanging two elements in any one permutation can affect $X$ by at 
most $c_1$, and
\item[-] for any $s$, if $X\geq s$ then there is a set of at most $c_2 s$ trials 
whose outcomes certify that $X\geq s$,
\end{itemize}
then for any $0\leq t\leq \bE(X)$,
$$
\bP(|X-\bE(X)|>t+60c_1\sqrt{c_2\bE(X)})\leq 4 e^{-t^2/(8c_1^2 c_2 \bE(X))}\;.
$$
\end{lemma}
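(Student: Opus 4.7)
The plan is to prove this inequality by reducing it to Talagrand's inequality (Lemma~\ref{lem:talagrand}), which already handles the case without permutations. The natural reduction is to encode each random permutation $\pi_j$ on a finite set $S_j$ as the order statistics of $|S_j|$ independent uniform variables on $[0,1]$. In this representation, swapping the values associated to two positions of $\pi_j$ can be simulated by modifying the two corresponding uniforms, so the hypothesis that transposing two elements of $\pi_j$ affects $X$ by at most $c_1$ translates into a bound of order $c_1$ on the influence of any single uniform trial, preserving the Lipschitz condition up to an absolute constant.

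Having expressed the permutations as independent trials, one then applies Lemma~\ref{lem:talagrand} directly to the enlarged product space. The point that requires care is that the certification condition must survive the reduction: a certificate of $X\geq s$ that originally exhibited $c_2 s$ trial outcomes and permutation entries must be translated into a certificate referring to $O(c_2 s)$ of the new uniform variables. This is done by noting that a statement of the form $\pi_j(k)=v$ can be certified by revealing only the uniforms relevant to the rank comparison determining $v$'s position; aggregating over all permutation entries referenced by the original certificate yields a certificate of the required linear size in $s$, with the hidden constants absorbed into the bound.

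The main technical obstacle is precisely this last step: showing that the certification count for the continuous encoding remains genuinely linear in $s$ rather than scaling with the sizes $|S_j|$. An alternative route that avoids the continuous encoding is a direct Doob-martingale argument, in which trials and permutation entries are exposed one at a time; the bounded-differences and transposition hypotheses yield bounded martingale differences, and Talagrand's method of certificates is then used to bootstrap from a weaker Azuma-type concentration of the form $\exp(-t^2/(c_1^2 n))$ to the sharper concentration $\exp(-t^2/(8 c_1^2 c_2 \E(X)))$ stated in the lemma. This second route is the one followed by McDiarmid and reproduced in Molloy and Reed~\cite{mr2002}, and produces exactly the stated inequality.
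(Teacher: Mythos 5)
The paper does not prove this lemma: it appears in the \emph{Probabilistic tools} section as a black-box result attributed to Molloy and Reed~\cite{mr2002}, so there is no proof in the text to compare yours against. Your concluding remark --- that the statement is McDiarmid's and is reproduced in~\cite{mr2002} --- is exactly how the paper treats it, and for the purposes of the paper a citation is all that is required.

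Your first, more constructive route contains a genuine error worth flagging. Encoding a random permutation on a set of size $N$ as the rank order of $N$ i.i.d.\ uniforms does \emph{not} convert the transposition hypothesis into a bounded-differences hypothesis for the uniforms. Swapping two uniforms does realise a transposition, but the converse direction --- which is the one the reduction needs --- fails: moving a single uniform $U_i$ from rank $r$ to rank $r'$ shifts the rank of element $i$ and simultaneously shifts by one the rank of every element whose uniform lies between the old and new values of $U_i$. The permutation thus changes by a cycle of length $|r'-r|+1$, which is a product of up to $N-1$ transpositions, so the hypothesis only bounds the change in $X$ by $(N-1)c_1$, not by $O(c_1)$. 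The Lipschitz constant in the enlarged product space therefore blows up with $N$, and Lemma~\ref{lem:talagrand} cannot be invoked after this encoding; the certification issue you singled out is secondary to this Lipschitz failure. A correct proof must work with the symmetric group directly, via McDiarmid's Talagrand-style isoperimetric argument for products of a hypercube and symmetric groups (which is what~\cite{mr2002} reproduces), rather than by reduction to the independent-trials case; your sketch of that second route is essentially an appeal to the reference rather than an argument.
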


\begin{lemma}[Lov\'asz Local Lemma]\label{lem:LLL}
Consider a set $\mathcal{E}$ of events such that for each $E\in \mathcal{E}$
\begin{itemize}
\item[-] $\bP(E)\leq p< 1$, and
\item[-] $E$ is mutually independent from the set of all but at most $D$ of other 
events.
\end{itemize}
If $4pD\leq 1$, then with positive probability, none of the events in 
$\mathcal{E}$ occur.
\end{lemma}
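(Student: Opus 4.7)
The plan is to deduce the Lovász Local Lemma from the stronger auxiliary statement: for every event $E\in \mathcal{E}$ and every subset $S\subseteq \mathcal{E}\setminus\{E\}$,
$$\Pr\Bigl(E \;\Big|\; \bigcap_{E'\in S}\overline{E'}\Bigr) \leq 2p.$$
Given this, enumerate $\mathcal{E}=\{E_1,\dots,E_m\}$ arbitrarily; the chain rule yields
$$\Pr\Bigl(\bigcap_{i=1}^m \overline{E_i}\Bigr) = \prod_{i=1}^m \Bigl(1 - \Pr\Bigl(E_i \;\Big|\; \bigcap_{j<i}\overline{E_j}\Bigr)\Bigr) \geq (1-2p)^m,$$
which is strictly positive since $4pD\leq 1$ with $D\geq 1$ forces $p\leq 1/4$, hence $1-2p\geq 1/2$. (The degenerate case $D=0$ means all events are mutually independent, and the conclusion follows directly from $p<1$.)

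I prove the auxiliary statement by induction on $|S|$. The base case $S=\emptyset$ is immediate from $\Pr(E)\leq p\leq 2p$. For the inductive step, split $S$ into $S_1$, the set of events in $S$ that are not mutually independent of $E$ (so $|S_1|\leq D$), and $S_2 = S\setminus S_1$. I write the conditional probability as a ratio,
$$\Pr\Bigl(E \;\Big|\; \bigcap_{E'\in S}\overline{E'}\Bigr) = \frac{\Pr\bigl(E \cap \bigcap_{E'\in S_1}\overline{E'} \;\big|\; \bigcap_{E'\in S_2}\overline{E'}\bigr)}{\Pr\bigl(\bigcap_{E'\in S_1}\overline{E'} \;\big|\; \bigcap_{E'\in S_2}\overline{E'}\bigr)},$$
then bound numerator and denominator separately. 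For the numerator, I drop the $\bigcap_{E'\in S_1}\overline{E'}$ factor to get an upper bound of $\Pr\bigl(E \mid \bigcap_{E'\in S_2}\overline{E'}\bigr)$, and collapse this to $\Pr(E)\leq p$ using the mutual independence of $E$ from all of $S_2$. For the denominator, I take complements and apply the union bound together with the induction hypothesis on each $E'\in S_1$, conditioned on $S_2$ (which has size strictly less than $|S|$), obtaining
$$\Pr\Bigl(\bigcap_{E'\in S_1}\overline{E'} \;\Big|\; \bigcap_{E'\in S_2}\overline{E'}\Bigr) \geq 1 - \sum_{E'\in S_1} 2p \geq 1 - 2pD \geq \tfrac{1}{2}.$$
Dividing gives the ratio at most $2p$, completing the induction.

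The single nontrivial point in the argument is the use of \emph{mutual}, rather than pairwise, independence when collapsing $\Pr\bigl(E \mid \bigcap_{E'\in S_2}\overline{E'}\bigr)$ to $\Pr(E)$: the conditioning event is a Boolean function of all events in $S_2$ jointly, so independence with respect to each $E'\in S_2$ individually would not suffice. Apart from this, the proof is essentially bookkeeping, and the constants match up cleanly: the factor $2$ in the auxiliary bound is precisely what makes $2pD\leq 1/2$ control the denominator while still leaving $1-2p>0$ for the final telescoping.
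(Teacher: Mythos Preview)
The paper does not prove this lemma; it is listed among standard probabilistic tools cited from~\cite{mr2002}. Your argument is the classical induction proof of the symmetric Local Lemma and is correct, including the care you take to note that \emph{mutual} independence is what justifies collapsing $\Pr\bigl(E \mid \bigcap_{E'\in S_2}\overline{E'}\bigr)$ to $\Pr(E)$.
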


\section{Proof of Theorem~\ref{thm:more_general}}

The rest of the paper is devoted to the proof of Theorem~\ref{thm:more_general}.
Throughout this section we will fix a family of graphs $\F$, $\epsilon>0$, $\beta\geq 1$ and $\a \geq 25\beta^2$. In order for some inequalities to hold, we will require $d$ to be large enough with respect to the previous parameters. Recall that, by assumption, there exists a graph $\G$ on $\a d$ vertices that satisfies the conditions of Theorem~\ref{thm:more_general}. We first describe the main ideas of the proof.
\subsection{Outline of the proof}
First of all, we choose a large bipartite subgraph $G'$ of $G$, by selecting a maximum edge-cut and keeping the edges in it. Observe that every vertex in $G'$ has degree at least $d/2$. Let $A$ and $B$ be the two stable sets of $G'$.

The goal of the proof is to select a spanning subgraph $H$ of $G'$ and a $V(\G)$ coloring $\chi$ of $H$ that satisfies the following properties
\begin{itemize}
\item[-]  for every $v\in V(H)$, $d_H(v)=\Omega(\delta(\G))$,

\item[-]  for every $v\in V(H)$, $N_H(v)$ is rainbow, and

\item[-]  for every $uv\in E(H)$, $\chi(u)\chi(v)\in E(\G)$.
\end{itemize}
From these properties, it will be easy to deduce that the subgraph $H$ satisfies the 
statement of Theorem~\ref{thm:more_general}. This is done at the end of the paper.

The subgraph $H$ is selected by deleting some edges of $G'$ depending on the color assigned to its vertices. Here and throughout the proof of the theorem, we always identify the set of $\a d$ colors with $V(\G)$.
\vspace{0.1cm}

\noindent We design a coloring procedure that is divided in two phases.

In Phase $I$ we color the vertices of $A$ and delete some of the edges of $G'$. This is done in two steps: the first one being random and the second one, deterministic. We obtain a spanning subgraph $H'$ of $G$ and a coloring $\chi'$ of $A$ with some desirable properties~(see Lemma~\ref{lem:after_phase_I} for a precise description).

In Phase $II$ we color the vertices of $B$ and delete some of the edges of $H'$. In contrast to Phase $I$, here we perform a randomized iterative coloring procedure to color most of the vertices of $B$. We also complete the coloring of $B$ with a deterministic step. After Phase $II$, we obtain the spanning subgraph $H$ of $G$ and the coloring $\chi$ of $H$ with the properties described above.

\subsection{Phase $I$: Coloring $A$}
For every graph $G$, partial coloring $\chi$ of $G$ and $v\in V$, we define
$$
\Bad(v,\chi,G)= |\{u\in N_G(v):\; \exists v'\in N_G(u)\setminus\{v\}\mbox{ and } 
\chi(v')=\chi(v)  \}|\;,
$$
that is, the number of vertices $u\in N_G(v)$ such that $\chi(v)$ appears more 
than once in $N_G(u)$. This quantity will be crucial throughout the paper.

We will color the set $A$ as follows:
\begin{enumerate}
\item For every $a\in A$, let $\chi'_0(a)=c$, where $c\in V(\G)$ is chosen 
independently and uniformly at random. 
\item Uncolor $a\in A$ if
$$\Bad(a,\chi'_0,G')\geq \frac{d}{\sqrt{\a}}\;.$$
 Let $A_1$ be the set of uncolored vertices in $A$.

\item Delete all the edges between $a\in A\setminus A_1$ and $N_{G'}(a)$ that may cause 
conflicts, i.e. delete $ab$ if $\exists a'\in N_{G'}(b)\setminus\{a\}$ such that $\chi'_0(a')=\chi'_0(a)$. Let $H'_0$ be  the subgraph obtained 
by removing these edges from $G'$.

\item Consider an arbitrary order of the vertices in $
A_1=(a_1,\dots,a_{s})$, where $s=|A_1|$.

\item For every $i$ from $1$ to $s$,
\begin{enumerate}

\item Assign to $a_i$ the color  in $V(\G)$ that minimizes 
$\Bad(a_i,\chi'_{i-1},H'_{i-1} )$.
Let $\chi'_i$ be the partial coloring of $A$ obtained from $\chi'_{i-1}$ and 
the colored vertex $a_i$.

\item Delete all the edges  between $a_i$ and $N_{H'_{i-1}}(a_i)$ that may 
cause conflicts, i.e. delete $a_i b$ if $\exists a'\in N_{H'_{i-1}}(b)\setminus\{a_i\}$ such that  and $\chi'_{i}(a')=\chi'_i(a_i)$. Let $H'_{i}$ be 
the subgraph obtained by removing these edges from $H'_{i-1}$.
\end{enumerate}
\item Let $\chi'=\chi'_s$ and $H'=H'_s$.

\end{enumerate}

For a random map $\chi:\,V(G) \to V(\G)$, let $p$ be the probability that 
an edge $uv\in E(G)$ gets mapped to an edge $\chi(u)\chi(v)\in E(\G)$. Then we have,
$$
p^-:=\frac{\delta(\G)}{\a d}\leq p=\frac{2|E(\G)|}{(\a d)^2}\leq \frac{\Delta(\G)}{\a d}=:p^+  \;.
$$
Observe that since $\G$ is $\beta$-almost regular, $p^+ \leq \beta p^-$.

The next lemma is going to be useful to prove that $\chi'$ has some good properties with respect to each  $b\in B$.
\begin{lemma}\label{lem:many_goodneighs}
Let $\alpha\geq 25$ and let $d$ be large enough. Let $\G$ be a graph on $\alpha d$ vertices.
Let $G'$ be a bipartite graph with stable sets $A$ and $B$, maximum degree at most $d$ and minimum degree at least $d/2$. Let 
$\chi$ be a random coloring of $A$ where each vertex is assigned a color from 
$V(\G)$ independently and uniformly at random. For every $b\in B$ and $c\in 
V(\G)$, let $W_{b,c}$ be the number of vertices $a\in N_{G'}(b)$ such that
\begin{enumerate}
\item[1.]  $a$ is the only vertex with color $\chi(a)$ in $N_{G'}(b)$,
\item[2.] $\Bad(a,\chi,G')\leq \frac{d}{\sqrt{\a}}$, and
\item[3.]  $\chi(a)c\in E(\G)$.
\end{enumerate}
Then,
$$ 
\bP\left( W_{b,c}\leq \frac{\delta(\G)}{4\a}\right)= e^{-\Omega(\delta(\G))}\;.
$$
\end{lemma}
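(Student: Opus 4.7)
The plan is first to show $\E[W_{x,c}] \ge \tfrac{3}{4} p^- d_G(x)$ for $\a$ sufficiently large, and then to apply concentration to conclude.

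For the expectation, I would fix $y \in N_G(x)$ and condition on $\chi(y) = c'$. The event $cc' \in E(\G)$ (Cond~3) has probability $d_\G(c)/(\a d) \ge p^-$. Given $\chi(y) = c'$, Cond~1 fails iff another vertex of $N_G(x) \setminus \{y\}$ receives color $c'$, with probability at most $(d_G(x)-1)/(\a d) \le 1/\a$. For Cond~2, linearity gives
\[
\E[\Bad(y,\chi,G) \mid \chi(y)=c'] \;=\; \sum_{u \in N_G(y)} \Pr[\exists v' \in N_G(u)\setminus\{y\}: \chi(v')=c'] \;\le\; d_G(y)\cdot\frac{d-1}{\a d} \;\le\; \frac{d}{\a},
\]
so Markov's inequality yields $\Pr[\Bad(y) > d/\sqrt{\a} \mid \chi(y)=c'] \le 1/\sqrt{\a}$. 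A union bound gives $\Pr[y \text{ is counted}] \ge p^-(1 - 1/\a - 1/\sqrt{\a})$, which for $\a$ large enough is at least $\tfrac{3}{4} p^-$; summing over $y \in N_G(x)$, $\E[W_{x,c}] \ge \tfrac{3}{4} p^- d_G(x)$.

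For concentration, I would decompose $W_{x,c} = \tilde W - L$, where $\tilde W = |\{y \in N_G(x): \text{Cond~1 and Cond~3 hold}\}|$ and $L = |\{y \in N_G(x): \text{Cond~1 and Cond~3 hold, but Cond~2 fails}\}|$, and bound
\[
\Pr\!\left[W_{x,c} \le \tfrac{1}{2}p^- d_G(x)\right] \;\le\; \Pr\!\left[\tilde W \le \tfrac{3}{4}p^- d_G(x)\right] + \Pr\!\left[L \ge \tfrac{1}{4}p^- d_G(x)\right].
\]
For the first term, $\tilde W$ depends only on colors in $N_G(x)$ and is an occupancy count: it equals the number of bins of $T := N_\G(c)$ (with $|T| \ge \delta(\G)$) receiving exactly one ball when $d_G(x)$ balls are thrown uniformly into $\a d$ bins. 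Using the deterministic bound $\tilde W \ge 2N_T - Y_T$, where $N_T$ is the number of nonempty bins of $T$ and $Y_T$ the total number of balls falling in $T$, both $N_T$ (a sum of negatively associated Bernoulli indicators) and $Y_T$ (binomial) concentrate exponentially around their means --- both of order $p^- d_G(x)$ --- by Chernoff; since $p^- d_G(x) \ge \delta(\G)/(2\a) = \Omega(\delta(\G))$, this delivers the required $e^{-\Omega(\delta(\G))}$ bound.

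The main obstacle is controlling $L$. The expectation bound $\E[L] \le p d_G(x)/\sqrt{\a}$ is at most $\tfrac{1}{8}p^- d_G(x)$ for $\a$ sufficiently large, but upgrading this to an exponential tail is delicate: $\Bad(y)$ depends on $\chi$ over $N^2_G(y)$, and changing a single color can flip the indicator $\ind[\Bad(y) > d/\sqrt{\a}]$ for many $y \in N_G(x)$ simultaneously (as every pair of vertices in $N_G(x)$ shares $x$ as a common neighbor in $G$), so Talagrand's inequality cannot be applied to $L$ with a bounded Lipschitz constant $c_1$ out of the box. My plan is to expose the random coloring in two stages --- first the colors of $Y \setminus N_G(x)$, then those of $N_G(x)$ --- and then apply Talagrand's inequality to $L$ conditional on a ``stable'' first-stage event, outside of which few $y \in N_G(x)$ have their first-stage partial $\Bad$-counts within $O(d/\sqrt{\a})$ of the threshold. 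On this stable event the effective Lipschitz constant for a single second-stage color change becomes $O(1)$; combined with a variance estimate bounding the probability of the unstable event by $e^{-\Omega(\delta(\G))}$, Talagrand's inequality then yields the required tail.
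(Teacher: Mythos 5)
Your decomposition $W_{x,c}=\tilde W - L$ and the occupancy argument for $\tilde W$ (via $\tilde W \ge 2N_T - Y_T$ with $T=N_\G(c)$, and Chernoff for $Y_T$ and for the negatively associated indicators making up $N_T$) are correct, and this is a genuinely different, arguably cleaner route to the Cond~1--Cond~3 part than the paper's, which instead exposes the coloring as ``color classes, then a random permutation of colors'' and applies McDiarmid's permutation inequality on the permutation step.

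The bound on $L$, however, is the crux of the lemma and your plan has a real gap there that is not quite the one you diagnose. The Lipschitz worry you raise is in fact a non-issue: if Cond~1 holds for $y$ then $\chi(y)$ occurs nowhere else in $N_G(x)$, so no second-stage (i.e.\ $N_G(x)$) color other than $\chi(y)$ itself can contribute to $\Bad(y)$; hence, conditional on the colors of $Y\setminus N_G(x)$, $\Bad(y)$ is a deterministic function of $\chi(y)$ alone --- this is precisely the observation the paper exploits --- and a single second-stage change moves $L$ by $O(1)$ with no ``stability'' event needed. The genuine obstruction is Talagrand's \emph{certificate} hypothesis: to certify $L\ge s$ one must, for each of the $s$ counted vertices $y$, certify that \emph{no other} vertex of $N_G(x)$ received color $\chi(y)$, a negative statement that forces exposing essentially all $d_G(x)$ second-stage colors rather than $O(s)$ of them, so $c_2$ is unbounded and the Talagrand tail is vacuous. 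A plain bounded-differences inequality does not rescue this either, as it only yields $e^{-\Omega((p^-)^2 d_G(x))}$, far weaker than the required $e^{-\Omega(\delta(\G))}$ when $\delta(\G)=d^\eps$ with $\eps$ small. The paper sidesteps this by instead bounding the \emph{complement} quantity $W'_{x,c}$, the number of $y$ failing Cond~1 or Cond~2 (events $(E1)$, $(E2)$): both are existence-type conditions with $O(1)$-size certificates once the colors of $Y\setminus N_G(x)$ are fixed, so Talagrand applies with $c_1,c_2=O(1)$. Your proof of the $L$ bound should be reorganised along those lines; the stable-event idea as stated does not address the certificate problem.
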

\begin{proof}
Let $X$ be the number of neighbors $a\in N(b)$ such that at least one of these 
two conditions holds
\begin{itemize}
\item[$(F1)$] \hspace{0.3cm} $\exists a'\in N(b)\setminus\{a\}$ with
$\chi(a')=\chi(a)$.
\item[$(F2)$]  \hspace{0.3cm} 
$
|\{b'\in N(a)\setminus\{b\}:\; \exists a'\in N(b')\setminus N(b)\mbox{ and } \chi(a')=\chi(a)  \}|\geq \frac{d}{\sqrt{\a}}-1\;.
$
\end{itemize}
Observe that if $a$ does not satisfy conditions $(F1)$ and $(F2)$, then there 
are at most $ \frac{d}{\sqrt{\a}}$ vertices $b'\in N(a)$ (this might include 
$b'=b$) such that there exists $a'\in N(b')\setminus\{a\}$ with $\chi(a')=\chi(a)$. In other 
words, $\Bad(a,\chi,G')\leq \frac{d}{\sqrt{\a}}$~(condition $2$ of the lemma). Moreover, condition $(F1)$ 
also implies that $a$ is the only vertex with color $\chi(a)$ in $N(b)$~(condition $1$ of the lemma).
 
We first show that, with high probability, $X$ is not too large. We will prove a stronger statement: $X$ is concentrated around its expected value. Indeed, we will fix a coloring of 
$A\setminus N(b)$ and prove that, conditional on any such coloring, $X$ is concentrated.

For the event $(F1)$, since $d(a)\leq d$ and there are $|V(\G)|=\a d$ colors, the probability that a vertex $a$ receives the same color as one of the other neighbors of $b$ is at most $1/\a$.

We call a color $c'$ \emph{dangerous} for $a\in N(b)$ if $(F2)$ is satisfied 
when $\chi(a)=c'$. The fact that $c'$ is dangerous for $a$ depends only on 
the coloring of $A\setminus N(b)$ which is already fixed. In particular, since 
there are at most $d^2$ edges $a'b'$ with $b'\in N(a)$, the number of dangerous 
color classes is at most $\frac{d^2}{(d/\sqrt{\alpha})-1}\leq 2\sqrt{\a} d$. 
Thus, the probability that a vertex $a\in N(b)$ satisfies the event $(F2)$ is 
at most $2/\sqrt{\a}$.

Using a union bound for the events $(F1)$ and $(F2)$ we obtain
$$
\bE(X \mid \chi(A\setminus N(b)))\leq  \left(\frac{1}{\a}+ \frac{2}{\sqrt{\a}}\right) d(b) \leq 
\frac{3d(b)}{\sqrt{\a}}\;.
$$

Since we have conditioned on the coloring given to $A\setminus N(b)$, $X$ only 
depends on the colors assigned to $N(b)$.
Changing the color of a vertex $a\in N(b)$ from $c'$ to $c''$ can change the value of $X$ by at most 
$2$. This change can create or destroy at most two vertices 
with the property $(F1)$. It can also be the case that $c''$ is a dangerous 
color for $a$ and that $c'$ is not (or vice versa); in this case the 
change is by at most $1$.

Furthermore, if we have $X\geq s$ and given that the coloring in $A\setminus 
N(b)$ has been fixed previously, there exists a set of $s$ choices of colors 
that certify $X\geq s$; if a vertex $a\in N(b)$ satisfies $(F1)$ it suffices to 
reveal all the vertices in $N(b)$ with color $\chi(a)$ (and all them count for $X$), and if it satisfies 
$(F2)$, it suffices to reveal $\chi(a)$, since the fact that it is a dangerous color is certified by the coloring of $A\setminus N(b)$, which is fixed.

Since $\alpha\geq 25$, by Talagrand's inequality with $c_1=2$ and $c_2=1$, we have
\begin{align}\label{eq:1}
\bP\left( X\geq \frac{4d(b)}{\sqrt{\a}}\mid \chi(A\setminus N(b))\right)&\leq 
\bP\left( |X-\bE(X)|\geq \frac{d(b)}{\sqrt{\a}}\mid \chi(A\setminus N(b))\right) 
= e^{-\Omega(d)}\;.
\end{align}
Since the previous inequality holds conditional on any coloring of $A\setminus 
N(b)$, it also holds unconditionally.

We now count the number of vertices $a\in N(b)$ for which neither (F1) nor (F2) hold and that satisfy $\chi(a)c\in E(\G)$. Such vertices are in $W_{b,c}$ since they fulfill conditions $1$, $2$ and $3$.
Observe that coloring each vertex in $A$ with a color chosen independently and 
uniformly at random in $V(\G)$ is equivalent to color $A$ in the same way and 
then permute the color classes according to a permutation $\pi$ of length 
$\a d=|V(\G)|$ chosen uniformly at random. These two steps can be done independently 
and thus we can analyse them separately. In the first step the color 
classes are set, while, in the second one each color class is assigned a particular color 
according to $\pi$. Observe that the condition $\chi(a)c\in E(\G)$ only depends 
on the second step, while $(F1)$ and $(F2)$ only 
depend on the partition induced by the color classes.

Let $M$ be the set of vertices $a\in N(b)$ such that, after the first step, conditions $(F1)$ and 
$(F2)$ are not satisfied. Notice that every vertex in $M$ receives a different 
color. Since $\pi$ is a uniformly chosen permutation, the set of $|M|$ colors 
assigned to $M$ is chosen uniformly from all the sets of size $|M|\subseteq [\a 
d]$.
Then, for every $I\subseteq N(b)$ with $|I|=i$,
$$
\bE(W_{b,c} | M=I) =\sum_{a\in I} \bP(\chi(a)c\in E(\G)| M=I)= \sum_{a\in I} 
\bP(\chi(a)c\in E(\G))\geq  p^- i\;.
$$
Recall that, by the independence of the two step coloring, to show that 
$W_{b,c}$ is large with high probability, we only need to focus on the second 
step. Once the color classes have been set, a swap between two positions of 
$\pi$ can change $W_{b,c}$ by at most $2$ and if $W_{b,c}\geq s$ we can certify 
it by only revealing the permutation $\pi$. By McDiarmid's inequality, for all 
$0<\gamma<1$
\begin{align}\label{eq:2}
\bP\left( W_{b,c}\leq (1-\gamma)p^- i| M=I\right)&\leq \bP\left( 
|W_{b,c}-\bE(W_{b,c})|\geq \gamma p^- i| M=I \right)\nonumber\\
 &= e^{-\Omega(p^-i)}\;.
\end{align}

Since $d(b)\geq d/2$, we have $\frac{p^-d(b)}{2}\geq \frac{\delta(\G)}{4\a}$. Now, using~(\ref{eq:1}) and~(\ref{eq:2}), we obtain the desired result
\begin{align*}
\bP\left( W_{b,c}\leq \frac{\delta(\G)}{4\a}\right) &\leq
\bP\left( W_{b,c}\leq \frac{p^- d(b)}{2}\right) \\
&= \sum_{I\subseteq  N(b)} 
\bP\left( W_{b,c}\leq  \frac{p^- d(b)}{2}| M=I\right) \bP(M=I)\\
&\leq \sum_{\substack{I\subseteq N(b)\\ i\geq (1-\frac{4}{\sqrt{\a}})d(b)}} 
\bP\left( W_{b,c}\leq \frac{p^-d(b)}{2}| M=I\right) \bP(M=I)+ e^{-\Omega(d)}\\
&\leq \sum_{\substack{I\subseteq  N(b)\\ i\geq (1-\frac{4}{\sqrt{\a}})d(b)}} 
\bP\left( W_{b,c}\leq \frac{p^-i}{2(1-\frac{4}{\sqrt{\a}})}| M=I\right) 
\bP(M=I)+ e^{-\Omega(d)}\\
&\leq  e^{-\Omega(\delta(\G))} \sum_{\substack{I\subseteq  N(b)\\ i\geq 
(1-\frac{4}{\sqrt{\a}})d(b)}}\bP(M=I)+ e^{-\Omega(d)}\\
&= e^{-\Omega(\delta(\G))}\;.
\end{align*}
\end{proof}

\begin{lemma}\label{lem:after_phase_I}
Let $\epsilon>0$ and $\alpha\geq 25$ and let $d$ be large enough. Let $\G$ be a graph on $\alpha d$ vertices such that $\delta(\G)>d^{\epsilon}$.

With positive probability, Phase $I$ provides a spanning subgraph $H'$ of $G$ and a coloring $\chi'$ of $A$ with the following properties,
\begin{itemize}
\item[$(P1)$] \hspace{0.3cm} for every $a\in A$, $d_{H'}(a)\geq d/4$,

\item[$(P2)$] \hspace{0.3cm} for every $b\in B$, $N_{H'}(b)$ is rainbow, and

\item[$(P3)$] \hspace{0.3cm} for every $b\in B$ and every $c\in V(\G)$, there 
are at least $\delta(\G)/4\a$ vertices $a\in N_{H'}(b)$ such that 
$\chi'(a)c\in E(\G)$.
\end{itemize}
\end{lemma}
\begin{proof}
Let us first show that $(P1)$ is satisfied. If $a\in A\setminus A_1$, we have 
$d_{H'}(a)= d_{H'_0}(a)\geq (1-1/\sqrt{\a})d_{G'}(a)$, since by the choice of 
$\chi'_0(a)$ we deleted at most $d/\sqrt{\a}$ edges incident to it. For every $a\in  A_1$, there is an $1\leq i\leq s$ such that $a=a_i$. Since there are at most $d^2$ edges 
incident to $N_{H_{i-1}'}(a_i)$, there exists a color $c\in V(\G)$ (recall that 
$|V(\G)|=\a d$) such that if $\chi'_i(a_i)=c$
$$
\Bad(a_i,\chi'_{i},H'_{i-1} )\!=\! |\{b\in N_{H'_{i-1}}(a_i):\; \exists a'\in 
N_{H'_{i-1}}(b)\setminus\{a_i\}\mbox{ and } \chi'_{i}(a')=\chi'_{i}(a_i)  \}|\leq 
\frac{d}{\a}\;.
$$
Since in Phase $I$ we only delete edges incident to $a_i$ when its color is 
assigned, $d_{H'}(a_i)=d_{H'_{i}}(a_i)\geq (1-1/\a)d_{G'}(a)$ for all $a_i\in 
A_1$. Since $\a\geq 8$ and since $d_{G'}(a)\geq d/2$, for all $a\in A$ we have $d_{H'}(a)\geq d/4$.

Property $(P2)$ also holds deterministically. Consider a vertex $b\in B$. Note that an edge $ab$ can only be deleted in the iteration when $a$ retains its color. Suppose that $a$ retains its color at the $i$-th iteration and that the edge $ab$ is not deleted. Then, the color $\chi'_i(a)$ only appears once in the partial coloring $\chi'_i$ restricted to $N_{H'_{i-1}}(b)$. Moreover, if a vertex $a'\in N_{H'_i}(b)\setminus\{a\}$ retains the color $\chi'_i(a)$ in a further iteration, then the edge $a'b$ is deleted. Thus, $a$ is the only neighbour of $b$ in $H'$ with color $\chi'_i(a)$. Since the choice of $a$ is arbitrary, $N_{H'}(b)$ is rainbow.

Let us show that $(P3)$ holds with positive probability. In order to show it, it suffices to look at the first partial coloring $\chi'_0$.
For every $b\in B$ and $c\in V(\G)$, let $E_{b,c}$ be the event that there are 
at most $\delta(\G)/4\a$ vertices $a\in N_{H'_0}(b)$ such that 
$a$ is the only vertex with color $\chi'_0(a)$ in $N_{H'_0}(b)$, $a$ retains its color (i.e. $\Bad(a,\chi'_0,G')\geq \frac{d}{\sqrt{\a}}$) and 
$\chi'_0(a)c\in E(\G)$.

By applying Lemma~\ref{lem:many_goodneighs} to $G'$ with $\chi=\chi'_0$ and noting that by assumption $\delta(\G)> d^\eps$,  we obtain $\bP( 
E_{b,c} )=  e^{-\Omega(d^\eps)}$. The event $E_{b,c}$ 
is mutually independent from the other events $E_{b',c'}$ where both $b'$ and 
$c'$ are at distance larger than $6$ from $b$ and $c$ in the respective graphs 
$G$ and $\G$. Then, each event is mutually independent from all but at most 
$d^6\Delta(\G)^6\leq d^{12}$ other events. Since 
$e^{-\Omega(d^\eps)}=O(d^{-13})$, provided that $d$ is large enough with respect to $\epsilon$ and $\alpha$, we can use the Local Lemma to show that none of the events in 
$E_{b,c}$ holds with positive probability.

By coloring the vertices $a_i\in A_1$ and deleting edges incident to 
$a_i$, one can neither decrease the degree of $b$ in $A\setminus A_1$ nor change the fact 
that a given $a\in A\setminus A_1$ is the only neighbor of $b$ with color $\chi'_0(a)$. 
Thus, with positive probability $(P3)$ is satisfied.
\end{proof}

\subsection{Phase $II$: Coloring $B$}
In the second coloring phase it will be convenient to redefine $\Bad(v,\chi,G)$ in a way that 
it takes into account the compatibilities between colors given by the edges of 
$\G$. For every graph $G$, every set $F\subseteq E(G)$ (we say that the edges in $F$ are \emph{activated}), every partial coloring $\chi$ of $G$, every $v\in V$ and every graph $\G$, we define
\begin{align*}
\Bad_\G(v,\chi,G)= \left|\left\{u\in N_G(v):
\begin{array}{c}
    \chi(u)\chi(v)\in E(\G),\exists v'\in N_G(u)\setminus\{v\},\\ \hspace{0.3cm}
   uv'\in F\text{ and } \chi(v')=\chi(v) \\
  \end{array}
\right\}\right| \;,
\end{align*}

We will color $B$ as follows:
\begin{enumerate}

\item Let $B_0=B$, $H_0=H'$, $\chi_0=\chi'$ and $i=1$. For every $a\in A$, let 
$B_0(a)=B_0\cap N_{H_0}(a)$.

\item While there exists $a\in A$ with $|B_{i-1}(a)|> d^{\epsilon/2}$,
\begin{enumerate}

\item For every edge in $H_{i-1}$, activate it independently with probability $1/\a$.

\item Construct $\chi_i$ as follows. For every $v\in A\cup (B\setminus 
B_{i-1})$, let $\chi_i(v)=\chi_{i-1}(v)$ and for every $b\in B_{i-1}$, let 
$\chi_i(b)=c$, where $c$ is chosen independently and uniformly at 
random in $V(\G)$.

\item Uncolor $b\in B_{i-1}$ if
$$
\Bad_\G(b,\chi_i,H_{i-1})\geq \frac{\delta(\G)}{8\alpha}\;.
$$
Let $B_{i}$ be the set of uncolored vertices.

\item Construct $H_i$ from $H_{i-1}$  by deleting the following edges: for 
every $b\in B_{i-1}\setminus B_i$, delete all the edges 
$ab$ in $H_{i-1}$ such that either can cause conflicts (there exists a vertex $ 
b'\in N_{H_{i-1}}(a)\setminus \{b\}$ such that $ab'$ is activated, $\chi_i(a)\chi_i(b)\in E(\G)$  and $\chi_i(b')=\chi_i(b)$), 
$\chi_i(a)\chi_i(b)\not\in E(\G)$ or $ab$ has not been activated.

\end{enumerate}
Let $B_i(a)=B_i\cap N_{H_{i}}(a)$ and increase $i$ by one.

\item Consider an arbitrary order of the vertices in $B_{\tau}=(b_{\tau+1},\dots,b_{t})$, where $t=|B_\tau|+\tau$.

\item For every $i$ from $\tau+1$ to $t$,
\begin{enumerate}
\item Activate all the edges of $H_{i-1}$.
\item Assign to $b_i$ the color in $\in V(\G)$ that minimizes  
$\Bad_\G(b_i,\chi_{i-1},H_{i-1})$.
Let $\chi_i$ be the partial coloring of $V(G)$ obtained from $\chi_{i-1}$ and 
the colored vertex $b_i$.

\item Construct $H_i$ from $H_{i-1}$  by deleting all the edges $ab_i$
such that either can cause conflicts (there exists a vertex $
b'\in N_{H_{i-1}}(a)\setminus \{b_i\}$ such that $\chi_i(a)\chi_i(b_i)\in E(\G)$ and $\chi_i(b')=\chi_i(b_i)$) or $\chi_i(a)\chi_i(b_i)\not\in E(\G)$.

\end{enumerate}

\item Let $\chi=\chi_t$ and $H=H_t$.
\end{enumerate}

We define $\tau$ to be the smallest $i$ such that $|B_i(a)|\leq 
d^{\eps/2}$ for every $a\in A$; that is, the number of iterations in step $2$ of Phase $II$. Observe that $\tau$ is a random variable.

\begin{lemma}\label{lem:after_phase_II}

With positive probability, Phase $II$ of the coloring procedure ends and 
provides a spanning subgraph $H$ of $G$ and a coloring $\chi$ of $H$ with the 
following properties,
\begin{itemize}
\item[$(Q1)$]  \hspace{0.3cm} for every $v\in V(H)$, $d_H(v)\geq 
\frac{\delta(\G)}{16\a^2}$,

\item [$(Q2)$]  \hspace{0.3cm} for every $v\in V(H)$, $N_H(v)$ is rainbow, and

\item [$(Q3)$]  \hspace{0.3cm} for every $uv\in E(H)$, $\chi(u)\chi(v)\in 
E(\G)$.
\end{itemize}
\end{lemma}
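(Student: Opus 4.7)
The plan is to dispatch $(Q2)$ and $(Q3)$ directly from the construction, and then devote the real effort to $(Q1)$. Property $(Q3)$ is immediate: every iteration (both random step 2(c) and deterministic step 4(b)) explicitly deletes any edge $ab$ with $\chi(a)\chi(b)\notin E(\G)$. Property $(Q2)$ splits in two: for $b\in B$ we inherit the rainbow property $(P2)$ from Lemma~\ref{lem:after_phase_I}, since Phase II never recolors an $A$-vertex and never adds edges; for $a\in A$ the conflict-deletion rule applied every time a $B$-vertex's color is finalized guarantees rainbowness of $N_H(a)$.

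For $(Q1)$ I would track, for each $a\in A$, the two sets $B_i(a)=B_i\cap N_{H_i}(a)$ (still uncolored neighbors) and $S_i(a)=N_{H_i}(a)\setminus B_i$ (locked-in neighbors that will remain in $N_H(a)$ permanently), so $d_{H_i}(a)=|B_i(a)|+|S_i(a)|$. The heart of the argument is an iteration invariant, proved by induction on $i$: with positive probability $|B_i(a)|\leq (1-\gamma)|B_{i-1}(a)|$ for some constant $\gamma=\gamma(\a)>0$, and correspondingly $|S_i(a)|$ gains a $\Theta(p^{-}|B_{i-1}(a)|)$ contribution from $b\in B_{i-1}(a)$ that (i) receive a random color $c'\in N_\G(\chi(a))$, (ii) are the unique vertex in $N_{H_{i-1}}(a)$ with that color, and (iii) satisfy the retention criterion. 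This is exactly the quantity estimated in Lemma~\ref{lem:many_goodneighs} applied with $X=A$, $Y=B_{i-1}$; the same two-step colour-class/permutation decomposition yields an $e^{-\Omega(\delta(\G))}$ tail bound, and since each such event depends only on colours in a bounded-radius ball, the Lovász Local Lemma combines them over all $a\in A$ in each iteration. After $\tau=O(\log d)$ iterations the condition $|B_\tau(a)|\leq d^{\eps/2}$ is met, and summing the per-iteration gains gives $|S_\tau(a)|=\Omega(\delta(\G)/\a)$.

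For the $B$-side of $(Q1)$, observe that a vertex $b$ is only retained in iteration $i$ when
\[d_{H_i}(b)\geq \tfrac{1}{2}\bigl|N_{H_{i-1}}(b)\cap \chi_i^{-1}(N_\G(\chi_i(b)))\bigr|,\]
because the right-hand side counts exactly those neighbors whose edges to $b$ could survive, and the retention criterion ensures at least half of them actually do. Using property $(P3)$ at the initial step and the iteration invariant thereafter, this lower bound is at least $\delta(\G)/(4\a)$, and subsequent iterations do not touch edges incident to a $b$ already in $B\setminus B_i$. For the final deterministic phase on $B\setminus B_\tau$, each $a\in A$ has at most $|B_\tau(a)|\leq d^{\eps/2}$ neighbors being coloured there, so at most $d^{\eps/2}$ edges at $a$ can be destroyed, which is negligible compared to $\delta(\G)>d^\eps$; the same choice-of-minimum-$\Bad$ argument used in Phase I then handles the degrees of these late-coloured $b$'s.

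The main obstacle is making the iterated argument work cleanly. Two technical points need care: first, the retention criterion in step 2(b) is self-referential (it compares $\Bad(b,\chi_i,H_{i-1})$ to a quantity that itself depends on the random colouring), so the application of concentration must condition on the colour classes before invoking the permutation step, exactly as in Lemma~\ref{lem:many_goodneighs}; second, the LLL dependency graph for iteration $i$ must be analysed using the bounded diameter of conflicts in $G$ and $\G$, and the inductive invariants from iterations $<i$ must be carried as deterministic side conditions so that independence of the Phase II random trials in iteration $i$ is preserved. Once these are set up, the degree bound $(Q1)$ follows by adding $|S_\tau(a)|$ (random gain) and the remaining slack, minus the $O(d^{\eps/2})$ deterministic loss.
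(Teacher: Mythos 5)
Your handling of $(Q2)$ and $(Q3)$ is fine, and the overall skeleton is close to the paper's: track the sets $B_i(a)$, argue exponential decay, use $(P3)$ for the degrees of $b\in B\setminus B_\tau$, and then finish deterministically. But there are two genuine problems, the more serious being the deterministic last phase.

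A smaller issue first. Your iteration invariant claims a two-sided $\Theta(p^-|B_{i-1}(a)|)$ gain for $|S_i(a)|$ at every iteration $i$, to be proved via Lemma~\ref{lem:many_goodneighs} applied with $Y=B_{i-1}$. But Lemma~\ref{lem:many_goodneighs} carries the hypothesis $d\leq 2d_G(x)$: the bipartite graph must be within a factor two of regular. This holds when you first apply it (on $H_0$, which is close to $d$-regular), but fails for the bipartite graph between $A$ and $B_{i-1}$ once $i\geq 2$, where $|B_{i-1}(a)|$ can vary wildly (from essentially $0$ up to $d/\a^{(i-1)/2}$). That is exactly why the paper only invokes Lemma~\ref{lem:many_goodneighs} once for a lower bound (Claim~\ref{cla:3}, iteration $1$), while the per-iteration \emph{upper} bounds (Claims~\ref{cla:1} and~\ref{cla:2}) are proved directly with Talagrand and Chernoff and carry a $\max\{\,\cdot\,,d^{\eps/2}\}$ floor, since concentration degrades once $|B_{i-1}(a)|$ is small. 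Your $\Theta$ statement suppresses that floor. Since you only actually need the lower bound for $i=1$, the over-claim is harmless to the $A$-side lower bound, but it foreshadows the real problem.

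The real gap is in the deterministic step for $b_i\in B_\tau$. You write that ``the same choice-of-minimum-$\Bad$ argument used in Phase I then handles the degrees of these late-coloured $b$'s.'' The Phase I argument bounds $|N^2(a_i)|\leq d^2$ and therefore finds a colour with at most $d/\a$ conflicts, which is adequate in Phase I because the degrees being defended are $\Theta(d)$. Here the degree of $b_i$ to be defended is only $\delta(\G)/2\a$, and since $\F$ contains no forest but $\ex(d,\F)=o(d^2)$, generically $\delta(\G)=o(d)$; then $d/\a$ deletions would annihilate the degree of $b_i$. What saves the argument is that by the time we reach $b_i$, every $a\in N_{H_{i-1}}(b_i)$ has $d_{H_{i-1}}(a)\leq |N_{H}(a)\cap(B\setminus B_\tau)|+|B_\tau(a)|\leq 4\Delta(\G)/\a+d^{\eps/2}$, obtained from summing $(C2)$ with its $d^{\eps/2}$ floor over $O(\log d)$ iterations and using the $(C1)$ decay (the display~\eqref{eq:5}). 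Hence $|N^2_{H_{i-1}}(b_i)|=O(\Delta(\G)d/\a)$, which yields a colour with only $O(\Delta(\G)/\a^2)$ conflicts; the almost-regularity $\Delta(\G)\leq\beta\delta(\G)$ together with $\a\geq 32\beta$ then gives $d_H(b_i)\geq\delta(\G)/2\a-O(\Delta(\G)/\a^2)\geq\delta(\G)/4\a$. Your $S_i(a)$ bookkeeping contains the raw material for this upper bound but you never extract it, and the step you wave off as ``the same as Phase I'' is precisely where this new, sharper estimate on $|N^2_{H_{i-1}}(b_i)|$ has to be made.
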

Let $\GN_i(a)$ be number of vertices in $b\in B_{i-1}(a)\setminus B_i(a)$ such that $ab$ is active in $H_{i-1}$ and $\chi_i(a)\chi_i(b)\in E(\G)$.
Recall that $p^+= \Delta(\G)/\a d$.

In order to prove Lemma~\ref{lem:after_phase_II} we will make sure that, for every $1\leq i<\tau$, the following three 
conditions are satisfied:
\begin{itemize}
\item[$(C1)$] \hspace{0.3cm}  for every $a\in A$, $\GN_{i}(a)\leq \max 
\left\{\frac{2}{\a}\cdot p^+|B_{i-1}(a)|, d^{\eps/2}\right\}$,
\item[$(C2)$] \hspace{0.3cm}  for every $a\in A$, $|B_i(a)|\leq 
\frac{d}{\a^{i/2}}$, and
\item[$(C3)$] \hspace{0.3cm}  for every $b\in B_{i-1}\setminus B_i$, 
$d_{H_i}(b)\geq \frac{\delta(\G)}{16\a^2}$.
\end{itemize}
In words, $(C1)$ ensures that the degree of $a$ in $B\setminus B_\tau$ is not 
too large; $(C2)$ implies that the number of uncolored vertices in each 
neighborhood of $A$ decreases exponentially with the number of iterations; 
and $(C3)$ ensures that $b\in B\setminus B_\tau$ has a large minimum degree in $H$. We will use $(C1)$ and $(C2)$ to prove that $b\in B_\tau$ also has a large minimum degree in $H$.

In particular, we will require that the following condition is satisfied after the first 
iteration of Phase $II$:
\begin{itemize}
\item[$(C4)$]  \hspace{0.3cm}  for every $a\in A$, the degree of  $a$ to 
$B\setminus B_1$ in $H_1$  is at least $\frac{\delta(\G)}{16\a^2}$.
\end{itemize}
Condition $(C4)$ ensures that the degree of $A$ in $H$ is large.
\vspace{0.2cm}

Let us show that the probability that any of these four conditions is 
violated, is exponentially small in terms of $d$. 
In order to control the 
condition $(C1)$, for every $a\in A$, we define $D^i_1(a)$ to be the event that 
$\GN_{i}(a)\geq  \max\left\{\frac{2}{\a}\cdot p^+|B_{i-1}(a)|, 
d^{\eps/2}\right\}$.
\begin{lemma}\label{lem:1}
For every $1\leq i<\tau$ and for every $a\in A$, we have
$$
\bP(D^i_1(a))  = e^{-\Omega(d^{\eps/4})} \;.
$$
\end{lemma}

\begin{proof}
Each edge $ab$ is activated independently with probability $1/\a$. Moreover, by definition of $p^+$, and independently from the fact that $ab$ has been activated, when $b$ is assigned a random color $c$, it satisfies $\chi_i(a)c\in E(\G)$ with probability at most $p^+$.  Since the choice of color 
for each $b\in B_{i-1}(a)$ is done independently, $\GN_i(a)$ follows a binomial 
distribution with $|B_{i-1}(a)|$ trials and probability at most 
$p^+/\a$. In particular, $\bE(\GN_i(a))\leq \frac{1}{\a}\cdot p^+|B_{i-1}(a)|$.

Suppose first that $|B_{i-1}(a)|\geq \frac{\a d^{\eps/4}}{p^+}$.  By Chernoff's 
inequality,
\begin{align*}
\bP\left(\GN_i(a) \geq \frac{2}{\a}\cdot p^+|B_{i-1}(a)| \right) &\leq 
\bP\left(|\GN_i(a)-\bE(\GN_i(a))|\geq \frac{1}{\a}\cdot p^+|B_{i-1}(a)| \right) \\
&= e^{-\Omega(p^+|B_{i-1}(a)|)} = e^{-\Omega(d^{\eps/4})}\;.
\end{align*}

Suppose now that $|B_{i-1}(a)|<\frac{\a d^{\eps/4}}{p^+}$. Then, $\bE(\GN_i(a))< 
d^{\eps/4}$. Provided that $d$ is large enough, $d^{\eps/2}\geq 7\bE(\GN_i(a))$ and by Chernoff's inequality,
\begin{align*}
\bP\left(\GN_i(a) \geq d^{\eps/2} \right) &=e^{-\Omega(d^{\eps/2})}\;.
\end{align*}
\end{proof}

In order to control the condition $(C2)$, for every $a\in A$, we define 
$D^i_2(a)$ to be the event that  $|B_i(a)|\geq \frac{d}{\a^{i/2}}$. 
\begin{lemma}\label{lem:2}
For every $1\leq i<\tau$ and for every $a\in A$, if $(C1)$ and $(C2)$ hold for 
every $1\leq j<i$, then
$$
\bP(D^i_2(a))  = e^{-\Omega(d^{\eps/4})} \;.
$$
\end{lemma}
\begin{proof}
We call a color $c$ \emph{dangerous} for $b\in B_{i-1}(a)$ if the number of 
vertices $a'\in N_{H_{i-1}}(b)\setminus\{a\}$ such that $\chi_i(a')\chi_i(b)\in 
E(\G)$ and there exists $b'\in N_{H_{i-1}}(a')\setminus B_{i-1}(a)$ with $a'b'$ activated and $\chi_i(b')=c$, is at least $\frac{\delta(\G)}{8\a}-1$. The fact that $c$ is a dangerous color is fully determined by the coloring of $B\setminus B_{i-1}(a)$ and the choice of activated edges between $A$ and $B\setminus B_{i-1}(a)$.

We now show that conditional on a certain event that holds with very high 
probability, for every $b\in B_{i-1}(a)$ there are few dangerous colors. 

Let $\Free_i(a')$ be the number of colors $c\in V(\G)$ with
$\chi_i(a')c\in E(\G)$ such that at least one of the following two conditions is satisfied:
\begin{itemize}
\item[-] $c$ does not appear in $N_{H_{i-1}}(a')\setminus B_{i-1}(a)$
\item[-] for every $b'\in N_{H_{i-1}}(a')\setminus B_{i-1}(a)$ with $\chi_i(b')=c$,  $a'b'$ is not activated.
\end{itemize}
In other words, the number of colors such that if we assign one of them to $b$, then $a'$ will not count for $\Bad_J(b,\chi_i,H_{i-1})$.

Let $E$ be the event that $\Free_i(a')\geq \left(1-\frac{1}{\alpha}\right) 
\delta(\G)$, for every $a'\in N^2_{H_{i-1}}(a)$. Observe that the event $E$ 
only depends on the coloring of $A$, the one of $B\setminus B_{i-1}(a)$ and the activated edges between $A$ and $B\setminus B_{i-1}(a)$.  

Next claim shows that the probability of $E$ is very large.
\begin{claim}\label{cla:1}
If $(C1)$ and $(C2)$ are satisfied for every $1\leq j\!< i$, then  $\bP(E) 
\!=\!1-e^{-\Omega(d^{\epsilon/4})}$.
\end{claim}
\begin{proof}
Observe that $\Free_i(a')$ can be controlled with $\GN_j(a')$, for $j\leq i$.
Since $(C1)$ is satisfied for 
every $j<i$, we have that $\GN_j(a')<\max\left\{\frac{2}{\a}\cdot p^+ 
|B_j(a')|,d^{\epsilon/2}\right\}$. Moreover, by Lemma~\ref{cla:1}, $D^i_1(a')$ 
does not hold with probability at least $1-e^{-\Omega(d^{\epsilon/4})}$. Recall that $D^i_1(a')$ is defined as $\GN_i(a')<\max\left\{\frac{2}{\a}\cdot p^+ 
|B_i(a')|,d^{\epsilon/2}\right\}$. A union bound shows that this is true for all 
$a'\in N^2_{H_{i-1}}(a)$ with probability $1-e^{-\Omega(d^{\epsilon/4})}$.

Since $(C2)$ holds for every $j<i$ and $|B_i(a')|\!\leq |B_{i-1}(a')|$, we have that  $\sum_{j=1}^i |B_j(a')|\leq 2d$. Thus, for every $a'\in N^2_{H_{i-1}}(a)$ we obtain
$$
\Free_i(a')\geq \delta(\G) -\sum_{j=1}^i\GN_j(a')\geq \delta(\G) - 
\frac{4}{\a}\cdot p^+ d\geq \left(1-\frac{1}{\alpha}\right) \delta(\G)\;,
$$
where we used that $\alpha\geq 25\beta^2$ in the last inequality. 
We conclude that $E$ is satisfied with probability 
$1-e^{-\Omega(d^{\epsilon/4})}$.
\end{proof}

Moreover, the event $E$ allow us to control the number of dangerous colors for 
the neighbors of $a$.
\begin{claim}\label{cla:2}
If $E$ is satisfied, then  for each $b\in N_{H_{i-1}}(a)$, the number of 
dangerous colors for $b$ is at most $9d$.
\end{claim}
\begin{proof}
Since $E$ implies $\Free_i(a')\geq\left(1-\frac{1}{\a}\right) \delta(\G)$ for 
every $a'\in N_{H_{i-1}}(b)\setminus \{a\}$, we have that there are at most 
$\frac{\delta(\G)}{\a}\cdot  d_{H_{i-1}}(b)$ edges $a'b'$ with 
$\chi_i(a')\chi_i(b')\in E(\G)$.
Recall that if $c$ is dangerous for $b$, then there are at least 
$\frac{\delta(\G)}{8\a}-1$ vertices $a'\in N_{H_{i-1}}(b)\setminus\{a\}$  that are incident 
to an (activated) edge $a'b'$ with $\chi_i(a')\chi_i(b')\in E(\G)$ and $\chi_i(b')=c$.  

Thus, the number of dangerous colors is at most,
$$
\frac{\frac{\delta(\G)}{\a}\cdot d_{H_{i-1}}(b)}{\frac{\delta(\G)}{8\a}-1}\leq 
9d\;.
$$
\end{proof}

Let us prove that the probability $|B_i(a)|$ is large is exponentially small. Let $X$ be the number of vertices $b\in B_{i-1}(a)$ that satisfy 
at least one of these two conditions,
\begin{itemize}
\item[$(F1)$] \hspace{0.3cm} there exists $b'\in N_{H_{i-1}}(a)\setminus\{ 
b\}$ with $\chi_i(b')= \chi_i(b)$,
\item[$(F2)$]  \hspace{0.3cm} $\chi_i(b)$ is a dangerous color for $b$.
\end{itemize}
We claim that $ |B_i(a)|\leq X$. Suppose that $b$ does not satisfy  
$(F1)$ and $(F2)$.  Since $(F1)$ does not hold, all the other vertices in 
$B_{i-1}(a)$ have a color different from $\chi_i(b)$. Together with $(F2)$ not 
holding, we obtain that $\Bad_\G(b,\chi_i,H_{i-1})\leq \frac{\delta(\G)}{8\a}$ 
and $b$ retains its color.

Thus, we will show that $X$ is small with very high probability. Indeed, it 
will suffice to show it in the case that the event $E$ holds. We fix a coloring 
on $B\setminus B_{i-1}(a)$ and an activation of edges between $A$ and $B\setminus B_{i-1}(a)$ that is compatible with $E$ and show that, 
conditional on that, $X$ is concentrated. Recall that the choice of the coloring 
of $B\setminus B_{i-1}(a)$ and of the activation of the edges between $A$ and $B\setminus B_{i-1}(a)$ fully determines whether $E$ is satisfied.

For $b\in B_{i-1}(a)$, the probability that $\chi_i(b)$ is assigned a color that already appears in $N_{H_{i-1}}(a)$ (that is, of condition $(F1)$) is at most $1/\a$, since $d(a)\leq d$ and there are $\a d$ colors.
Since $E$ is satisfied, by Claim~\ref{cla:2}, for each $b\in B_{i-1}(a)$ there are at most $9d$ dangerous colors. Thus,  the probability that $b$ is assigned a dangerous color is at most $9/\a$.

By the hypothesis of the lemma, $(C2)$ holds for $i-1$ and there are  
$|B_{i-1}(a)|\leq d/\a^{(i-1)/2}$ candidates for $X$. Hence, conditional on $\chi_i(B\setminus B_{i-1}(a))$ and on the activation of edges,
$$
\bE(X)\leq \frac{10}{\a}\cdot |B_{i-1}(a)|\leq 
10\a^{-1/2}\cdot\frac{d}{\a^{i/2}}\;.
$$

As in the proof of Lemma~\ref{lem:many_goodneighs}, by changing the color of a 
vertex $b\in B_{i-1}(a)$ one can change $X$ by at most $2$ and, if $X\geq s$, then
there exists a set of $s$ colored vertices that certifies $X\geq s$. By applying 
Talagrand's inequality to $X$ with $c_1=2$ and $c_2=1$, and conditional on  $\chi_i(B\setminus B_{i-1}(a))$ and on the activation of edges,
\begin{align*}
\bP\left(D^i_2(a)\right)
&\leq\bP\left(X\geq \frac{d}{\a^{i/2}}\right) 
\leq \bP\left(|X-\bE(X)|> \left(1-10\a^{-1/2}\right) \frac{d}{\a^{i/2}} \right)\\
&= e^{-\Omega(d/\a^{i/2})}=e^{-\Omega(d^{\eps/2})}\;,
\end{align*}
if $\a\geq 400$. Since the previous statement holds for any choice of
$\chi_i(B\setminus B_{i-1}(a))$ and of activation of edges that  are compatible with $E$, it also holds if 
we only condition on $E$. Since by Claim~\ref{cla:1}, $\bP(E) 
=1-e^{-\Omega(d^{\epsilon/4})}$, we obtain
\begin{align*}
\bP\left(D^i_2(a)\right)\leq \bP(\overline{E})+  \bP\left(D^i_2(a)\mid 
E\right) = e^{-\Omega(d^{\eps/4})}\;.
\end{align*}

\end{proof}

In order to control the condition $(C3)$, for every $b\in B_{i-1}\setminus 
B_i$, we define $D^i_3(b)$ to be the event that $d_{H_i}(b)\leq 
\frac{\delta(\G)}{8\a^2}$.
\begin{lemma}\label{lem:3}
For every $b\in  B_{i-1}\setminus B_i$,
$$
\bP(D^i_3(b))  = e^{-\Omega(d^{\epsilon})} \;.
$$
\end{lemma}
\begin{proof}
If $b\in B_{i-1}\setminus B_i$, then $\Bad_\G(b,\chi_i,H_{i-1})\leq 
\frac{\delta(\G)}{8\a}$; that is, there are at most $ 
\frac{\delta(\G)}{8\a}$ vertices $a\in N_{H_{i-1}}(b)$ with $\chi_i(a)\chi_i(b)\in E(\G)$ such that there exists $b'\in N_{H_{i-1}}(a)\setminus \{b\}$ with $ab'$ activated and $\chi_i(b')=\chi_i(b)$. By $(P3)$ with $c=\chi_i(b)$, there are at least 
$\frac{\delta(\G)}{4\a}$ vertices $a\in N_{H_{i-1}}(b)$ with 
$\chi_i(a)\chi_i(b)\in E(\G)$. Hence, there are at least 
$\frac{\delta(\G)}{8\a}$ vertices $a\in N_{H_{i-1}}(b)$ with 
$\chi_i(a)\chi_i(b)\in E(\G)$ such that either $b$ is the only neighbour of $a$ with 
color $\chi_i(b)$, or if there is another neighbour $b'$ with $\chi_i(b')=\chi_i(b)$, then $ab'$ is not activated and will be deleted in the case that $b'$ retained its color.

We activate every such edge $ab$ independently with probability $1/\a$. Therefore, the 
probability that $d_{H_i}(b)$ is smaller than $k$ is at most the probability that a Binomial random variable with  
$\frac{\delta(\G)}{8\a}$ trials and probability $1/\a$ is smaller than $k$. Since $\delta(\G)\geq d^{\epsilon}$, Chernoff's inequality 
implies that,
$$
\bP(D^i_3(b))= \bP\left(d_{H_i}(b)\leq \frac{\delta(\G)}{16\a^2}\right) = 
e^{-\Omega(d^{\epsilon})} \;.
$$

\end{proof}

In order to control the condition $(C4)$, for every $a\in A$, we define 
$D^1_4(a)$ to be the event that the degree of $a$ to $B\setminus B_1$ in $H_1$ is 
at most $\frac{\delta(\G)}{16\a^2}$, that is $|N_{H_1}(a)\cap (B\setminus 
B_1)|\leq \frac{\delta(\G)}{16\a^2}$.
\begin{lemma}\label{lem:4}
For every $a\in A$,
$$
\bP(D^1_4(a))  = e^{-\Omega(d^{\epsilon})} \;.
$$
\end{lemma}
\begin{proof}
Note that in the first iteration of Phase $II$ we can color $B$ in the following equivalent way: 
\begin{itemize}
 \item[$i)$] \hspace{0.3cm}Color each vertex in $B\setminus N_{H_0}(a)$ independently.
 \item[$ii)$] \hspace{0.3cm}Color each vertex in $N_{H_0}(a)$ independently.
 \item[$iii)$] \hspace{0.3cm}Keep the color classes in $B\setminus N_{H_0}(a)$ but permute the 
colors assigned to each class.
\end{itemize}
After the step $i)$, we say that a color class $C\subseteq B\setminus N_{H_0}(a)$ (with no color assigned, yet) is \emph{dangerous} for $b\in N_{H_0}(a)$ if there are at least  
$\frac{\delta(\G)}{8\a}-1$ activated edges $a'b'$ with $a'\in N_{H_0}(b)\setminus\{a\}$ and $b'\in C$. 

For a $b\in N_{H_0}(a)$, we consider the following set of conditions,
\begin{itemize}
\item[$(F1)$] \hspace{0.3cm} there exists $b'\in N_{H_0}(a)\setminus \{b\}$ with $\chi_1(b')= \chi_1(b)$,
\item[$(F2)$]  \hspace{0.3cm} $\chi_1(b)$ is a dangerous color for $b$,
\item[$(F3)$]  \hspace{0.3cm} $\chi_1(a)\chi_1(b)\notin E(\G)$.
\end{itemize}
Let $Y$ be the number of vertices $b\in N_{H_0}(a)$ that do not satisfy $(F1)$, $(F2)$ and $(F3)$.

Observe that the number of vertices $b\in N_{H_0}(a)$ that do not satisfy $(F1)$ and $(F3)$ is determined by the coloring of $N_{H_0}(a)$ (step $ii)$).
The number of vertices that do not satisfy $(F2)$ is determined by the final coloring of $B\setminus N_{H_0}(a)$ (steps $i)$ and $iii)$) and the activation of the edges between $A$ and $B\setminus N_{H_0}(a)$. We will use the fact these steps are independent to bound $Y$.

Let $Z$ be the number of vertices $b\in N_{H_0}(a)$ that do not satisfy $(F1)$ and $(F3)$. For a given vertex $b$, the probability that $\chi_1(a)\chi_1(b)\in E(\G)$ is at least $p^-=\delta(\G)/\a d$. Given that a color has been assigned to $b$, the probability that this color is unique in $N_{H_0}(a)$ is at least $(1-\frac{1}{\a d})^d\geq 1-1/\a$. Since $d_{H_0}(a)\geq d/4$, we have 
$$
\bE(Z)\geq \left(1-\frac{1}{\a}\right)p^- d_{H_0}(a)\geq \frac{(1-1/\a)\delta(\G)}{4\a}\;.
$$
Similarly as in the proof of Lemma~\ref{lem:many_goodneighs}, we can apply Talagrand's inequality to obtain that 
\begin{align}\label{eq:wakawaka}
\bP\left(Z\leq \frac{\delta(\G)}{8\a}\right)\leq e^{-\Omega(\delta(\G))}\;.
\end{align}

To show that $Y$ is typically large, consider $M$ to be the set of vertices that do not satisfy  $(F1)$ and $(F3)$. Let $b\in M$.
At step $iii)$, each color class will get assigned a random (and different) color and, if in the step $i)$ we have created a small number of dangerous color classes, it will be likely that $b$ does not satisfy $(F2)$.

We define $E$ to be the event that, for every $a'\in N^2_{H_0}(a)$, there are at most $\frac{2d}{\a}$ activated edges $a'b'$ with $b'\notin N_{H_0}(a)$. A simple use of Chernoff's inequality and a union bound, gives $\bP(E) =1-e^{-\Omega(d)}$. The event $E$ is fully determined by the activation of edges between $A$ and $B\setminus N_{H_0}(a)$. Moreover, it allows us to control the number of dangerous color classes.
\begin{claim}\label{cla:3}
If $E$ is satisfied, then for each $b\in N_{H_0}(a)$, the number of 
dangerous color classes for $b$ is at most $\a^{1/2}d$.
\end{claim}
\begin{proof} Since $E$ is satisfied,  for any $a'\in N_{H_0}(b)$, there are at most $2d/\a$ activated edges $a'b'$ with  $b'\notin N_{H_0}(a)$. Since $N_{H_0}(b)$ is rainbow, there are at most $\Delta(\G)\leq \beta 
\delta(\G)$ vertices $a'\in N_{H_0}(b)$ with $\chi_1(a')\chi_1(b)\in E(\G)$ (recall that $\chi_1(b)$ has been fixed at step $ii)$). 
Therefore, there are at most $\frac{2\beta}{\a}\delta(\G) d$ activated edges $a'b'$ 
with $a'\in N_{H_0}(b)$,  $\chi_1(a')\chi_1(b)\in E(\G)$ and $b'\notin N_{H_0}(a)$.  

Recall that a color class $C\subseteq B\setminus N_{H_0}(a)$ is dangerous for $b$ if there are at least  $\frac{\delta(\G)}{8\a}-1\geq \frac{\delta(\G)}{9\a}$ activated edges $a'b'$ with $a'\in N_{H_0}(b)$, $\chi_1(a')\chi_1(b)\in E(\G)$ and $b'\in C$. 

Thus, there are at most $18\beta d \leq \a^{1/2}d$ dangerous color classes, as desired. 
\end{proof}

We now proceed to show that, conditional on $E$, most of the vertices in $M$ do not satisfy $(F2)$. By Claim~\ref{cla:3} and since there are at least $\a d$ colors, the probability that all the dangerous classes for $b$ are assigned in step $iii)$ a color different than $\chi_1(b)$ is at least $1 -\a^{-1/2}$. Thus, 
$$
\bE(Y\mid E,\, M) \geq (1 -\a^{-1/2}) |M|\;.
$$  
Observe that swapping the colors in two color classes of $B\setminus N_{H_0}(a)$ can only change $Y$ by at most $2$, since every vertex in $M$ has a unique color. We can use McDiarmid's inequality to prove that
$$
\bP\left(Y\leq (1-2\alpha^{-1/2})|M| \mid E,\,M\right)\leq e^{-\Omega(|M|)}\;.
$$
Using~\eqref{eq:wakawaka} and $\bP(E)=1- e^{-\Omega(d)}$, we obtain a bound on the unconditional probability,
$$
\bP\left(Y\leq \frac{(1-2\a^{-1/2})\delta(\G)}{8\alpha}\right)\leq e^{-\Omega(d^{\epsilon})}\;,
$$
where we used that $\delta(\G)\geq d^{\epsilon}$.

If $b\in N_{H_0}(a)$ does not satisfy $(F1)$, $(F2)$ and $(F3)$, then $b\in B\setminus B_1$. The edge $ab$ is retained in $H_{1}$ if it was activated in $H_0$. Observe that the  events $(F1)$, $(F2)$ and $(F3)$ are independent of the activation of the edges between $a$ and $N_{H_0}(a)$: $(F1)$ and $(F3)$ do not depend on edge activations, and $(F2)$ only depends on the activation of edges between $A$ and $B\setminus N_{H_0}(a)$. Such an edge $ab$ is activated independently with probability $1/\a$. Using Chernoff's inequality we conclude the proof,
$$
\bP(D^1_4(a))=\bP\left( |N_{H_1}(a)\cap (B\setminus B_1)|\leq 
\frac{\delta(\G)}{16\a^2}\right) = e^{-\Omega(d^{\epsilon})}\;.
$$
\end{proof}

\begin{lemma}\label{lem:5}
Conditions $(C1)$, $(C2)$, $(C3)$ and $(C4)$ hold after the first iteration 
with positive probability.
\end{lemma}
\begin{proof}
By Lemmas~\ref{lem:1},~\ref{lem:2},~\ref{lem:3} and~\ref{lem:4}, we have 
$\bP(D^1_k(a))= e^{-\Omega(d^{\eps/4})}$, for every $k\in\{1,2,3,4\}$. On the 
other hand, $D^1_1(a)$, $D^1_2(a)$, $D^1_3(b)$ and $D^1_4(a)$ are mutually independent 
from every other $D^1_1(a')$, $D^1_2(a')$, $D^1_3(b')$ and $D^1_4(a')$ such that $a'$ 
(or $b'$) is at distance larger than $6$ from $a$ (or $b$) in $H_{0}$. Thus, 
each event is mutually independent from all but at most $4d^6$ other events. 
Since $e^{-\Omega(d^{\eps/4})}=O(d^{-7})$, provided that $d$ is large enough with respect to $\epsilon$, $\beta$ and $\alpha$, we can use the Local Lemma to show that
after the first iteration, $(C1)$, $(C2)$, $(C3)$ and $(C4)$ hold with positive 
probability.
\end{proof}

The same argument suffices to show the following.
\begin{lemma}\label{lem:6}
Let $1\leq i<\tau$. If $(C1)$, $(C2)$ and $(C3)$ hold for every $1\leq j< i$, 
then $(C1)$, $(C2)$ and $(C3)$ hold after the $i$-th iteration with positive 
probability.
\end{lemma}

\begin{proof}[Proof of Lemma~\ref{lem:after_phase_II}.]
Observe that $(Q2)$ and $(Q3)$ are satisfied deterministically, by Phases $I$ 
and $II$ of the coloring procedure. 

We first show that Phase $II$ stops with positive probability. By Lemma~\ref{lem:6} condition $(C2)$ is satisfied with positive probability for every $i\geq 1$. Since the stopping condition of the iterative part of Phase $II$ is $|B_i(a)|\leq d^{\eps/2}$ for every $a\in A$, with positive probability the iterative part of Phase $II$ ends after $O(\log{d})$ iterations.

It remains to show that $(Q1)$ is satisfied with 
positive probability at the end of Phase $II$.
By Lemma~\ref{lem:5}, condition $(C4)$ is satisfied after the first iteration. 
Hence,
$$
d_{H}(a)\geq |N_{H}(a)\cap (B\setminus B_1)| =|N_{H_1}(a)\cap (B\setminus B_1)| 
\geq \frac{\delta(\G)}{16\a^2}\;.
$$

Let us show that the degrees of $b\in B$ in $H$ are also large. First notice 
that the degree of a vertex $b\in B$ only decreases in the iteration when $b$ 
retains its color.

Suppose first that $b\in  B\setminus B_{\tau}$. By Lemma~\ref{lem:6}, condition $(C3)$ is satisfied at every iteration. Therefore,
$$
d_H(b) \geq \frac{\delta(\G)}{16\a^2}\;.
$$
Now suppose that $b\in B_\tau$. Then $b=b_i$, for some $i\geq \tau+1$. Recall 
that Phase $II$ assigns to $b_i$ the color that minimizes 
$\Bad_J(b_i,\chi_{i-1},H_{i-1})$.
First, we show that for every $a\in A$, $d_{H_{i-1}}(a)$ is not too large. By 
Lemma~\ref{lem:6}, $(C2)$ holds at each iteration, which 
implies that the number of uncolored neighbors decreases exponentially fast~(in 
particular, $\tau=O(\log{d})$). Recall that since $\G$ is $\beta$-almost regular, $\Delta(\G)\leq \beta\delta(\G)$.
Since $(C1)$ and $(C2)$ are satisfied at every iteration, we have
\begin{align*}
|N_{H_{i-1}}(a)\cap (B\setminus B_{\tau})| &\leq \sum_{i=1}^{\tau} \GN_i(a) 
\leq \sum_{i=1}^{\tau} \frac{2}{\a} p^+|B_i(a)| + O(d^{\eps/2}\log{d}) 
\nonumber\\
&\leq \left(\sum_{i=1}^{\infty} 
\frac{1}{\a^{i/2}}\right)\frac{2\Delta(\G)}{\a^2 d}|B_0(a)| + 
O(d^{\eps/2}\log{d}) \nonumber\\
&\leq \frac{4\Delta(\G)}{\a^2}\leq \frac{\delta(\G)}{2\a}\;,
\end{align*}
provided that $d$ is large enough and since $\alpha\geq 25\beta^2$. Here we also used that 
$|B_0(a)|\leq d$.
By the definition of $\tau$, we also have $|N_{H_{i-1}}(a)\cap B_{\tau}|= |B_{\tau}(a)| \leq d^{\eps/2}$. 
This implies that, for any $i>\tau$
\begin{align}\label{eq:5}
d_{H_{i-1}}(a)&\leq \frac{\delta(\G)}{2\a}+ d^{\eps/2}\leq 
\frac{\delta(\G)}{\a}\;.
\end{align}
Now we can lower bound the degree of $b_i$ in $H$. By~(\ref{eq:5}), 
there are at most $\frac{\delta(\G)d}{\a}$ edges in $H_{i-1}$ incident to a neighbour of $b_i$.
Thus, there is a color $c\in V(\G)$ such that if $\chi_i(b_i)=c$, 
$\Bad_{\G}(b_i,\chi_{i},H_{i-1})$ is small, in fact
$$
|\{a\in N_{H_{i-1}}(b_i):\;\exists b'\in 
N_{H_{i-1}}(b_i)\setminus\{b_i\}\mbox{ and } \chi_{i}(b')=c  \}|\leq 
\frac{\delta(\G)}{\a^{2}}\;.
$$
We set $\chi_i(b_i)=c$ and delete all the edges $ab_i$ for some $a\in A$ that either 
may cause conflicts (at most $\delta(\G)/\a^2$) or $\chi_i(a)\chi_i(b_i)\notin E(\G)$. Since property $(P3)$ holds for every $b\in B$ and every color 
$c\in V(\G)$, the degree of $b_i$ in $H$ is
$$
d_{H}(b_i)\geq \frac{\delta(\G)}{4\a} - \frac{\delta(\G)}{\a^{2}}\geq  
\frac{\delta(\G)}{16\a^2}\;,
$$
provided that $\a\geq 5$.

\end{proof}

We conclude with the proof of the main theorem.
\begin{proof}[Proof of Theorem~\ref{thm:more_general}.]
Let $H$ be the spanning subgraph and let $\chi$ be the partial coloring of $H$ provided by Lemma~\ref{lem:after_phase_II}. For the sake of contradiction, suppose that $H$ contains a locally injective copy of  $F\in \F$. Consider the coloring of $F$ induced by the colors given by $\chi$ to the locally injective copy of $F$ in $H$. Since for every edge $ab$ in $H$ we have $\chi(a)\chi(b)\in E(\G)$, the coloring of $F$ induces an homomorphism from $F$ to $\G$.  Moreover, since for every vertex $v\in V(H)$, $N_H(v)$ is rainbow, this homomorphism is locally injective. However, $\hom^*(F,\G)=0$ by the hypothesis of the theorem and we obtain a contradiction. Thus, $\hom^*(F,H)=0$. Finally, the subgraph $H$ also satisfies
$\delta(H)=\Omega(\delta(\G))$.
\end{proof}


\end{document}